\documentclass[11pt]{article}
\usepackage[]{amsmath,amssymb,amsfonts,latexsym,amsthm,enumerate,fullpage}

\hbadness=10000 \vbadness=10000
\parskip=\smallskipamount
\parindent=0in


\newtheorem{thm}{Theorem}
\newtheorem*{thm*}{Theorem}
\newtheorem{lemma}[thm]{Lemma}
\newtheorem*{lemma*}{Lemma}

\newtheorem*{prop*}{Proposition}
\newtheorem{cor}[thm]{Corollary}

\theoremstyle{remark}

\newtheorem*{rmk*}{Remark}
\newtheorem*{rmks*}{Remarks}
\newtheorem*{not*}{Notation}

\newtheorem*{claim*}{Claim}
\newtheorem*{fact*}{Fact}
\newtheorem{ex}{Example}
\theoremstyle{definition}
\newtheorem{dfn}{Definition}

\begin{document}

\title{Worst Case to Average Case Reductions for Polynomials}
\author{Tali Kaufman\thanks{Research supported in part by
NSF Awards CCF-0514167 and NSF-0729011.}\\
MIT \& IAS\\
kaufmant@mit.edu\\
\and Shachar Lovett\thanks{Research supported by
ISF grant 1300/05}\\
Weizmann Institute of Science\\
shachar.lovett@weizmann.ac.il } 

\maketitle

\def \E{\mathbb E}
\def \R{\mathbb R}
\def \C{\mathbb C}
\def \Z{\mathbb Z}
\def \N{\mathbb N}
\def \F{\mathbb F}
\def \P{\mathbb P}

\def \GF{\mathcal F}

\def\<{\left<}
\def\>{\right>}
\def \({\left(}
\def \){\right)}
\def \[{\left[}
\def \]{\right]}

\begin{abstract}
A degree-$d$ polynomial $p$ in $n$ variables over a field $\F$ is
{\em equidistributed} if it takes on each of its $|\F|$ values close
to equally often, and {\em biased} otherwise. We say that $p$ has a
{\em low rank} if it can be expressed as a bounded combination of
polynomials of lower degree. Green and Tao~\cite{gt07} have shown
that bias imply low rank over large fields (i.e. for the case $d <
|\F|$). They have also conjectured that bias imply low rank
over
general fields. In this work we affirmatively answer their
conjecture. Using this result we obtain a general worst case to
average case reductions for polynomials. That is, we show that a
polynomial that can be {\em approximated} by few polynomials of
bounded degree, can be {\em computed} by few polynomials of bounded
degree. We derive some relations between our results to the
construction of pseudorandom generators, and to the question of
testing concise representations.
\end{abstract}

\newpage

\section{Introduction}\label{sec:intro}

%

Let $\F$ be a prime finite field. Let $p:\F^n \rightarrow \F$ be a
polynomial in $n$ variables over $\F$ of degree at most $d$ . We say
that $p$ is {\em equidistributed} if it takes on each of its $|\F|$
values close to equally often, and {\em biased} otherwise. We say
that $p$ has a {\em low rank} if it can be expressed as a bounded
combination of polynomials of lower degree, and {\em high rank}
otherwise. More formally we consider the following definitions.

\begin{dfn}[bias]
The {\em bias} of a function $f:\F^n \to \F$ is defined to be
$$
    bias(f) = \E_{X \in \F^n}[\omega^{f(X)}]
$$
where $\omega$ stands for the $|\F|$ root of unity, i.e. $\omega =
e^{\frac{2\pi}{|\F|}}$
\end{dfn}

\begin{dfn}[rank] Let $p(X)$ be a degree $d$ polynomial over $\F^n$.
$rank_{d-1}(P)$ is the smallest integer $k$ such that there exist
degree $d-1$ polynomials $q_1(X),...,q_k(x)$, and a function $F:\F^k
\to \F$, s.t. $p(X) = F(q_1(X),...,q_k(X))$.
\end{dfn}

Green and Tao \cite{gt07} have shown that over large fields bias
imply low rank.

\begin{thm}[Theorem 1.7 in \cite{gt07}]\label{thm:gt}
Let $p(X)$ be a degree $d$ polynomial over $\F^n$, where $d<|\F|$.
If $bias(p) \ge \delta > 0$, then
$rank_{d-1}(p) \le c(\delta,d)$.
\end{thm}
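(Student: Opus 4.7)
My plan combines an analytic passage from bias of $p$ to bias of a symmetric multilinear form $M$, with a polarization step that recovers $p$ from $M$ and uses the hypothesis $d<|\F|$ essentially. Set $f=\omega^{p}$. Since $|f|=1$, the Gowers norms $\|f\|_{U^k}$ are non-decreasing in $k$, so $\|f\|_{U^d}\ge\|f\|_{U^1}=|bias(p)|\ge\delta$. For a polynomial of degree exactly $d$, the iterated discrete derivative $\partial_{h_1}\cdots\partial_{h_d}p(x)$ is independent of $x$ and equals the symmetric $d$-linear form $M(h_1,\ldots,h_d)$ obtained by polarizing the top-degree homogeneous part $p_d$ of $p$. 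Expanding the definition of the Gowers norm then yields
$$\|f\|_{U^d}^{2^d} \;=\; \E_{h_1,\ldots,h_d}\bigl[\omega^{M(h_1,\ldots,h_d)}\bigr] \;=\; bias(M),$$
and hence $|bias(M)|\ge\delta^{2^d}$.

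The next step is to show that a multilinear form of arity $d$ with $|bias(M)|\ge\eta$ admits a ``partition rank'' decomposition of bounded size: $M=\sum_{i=1}^{r}M_i'\cdot M_i''$ where each $M_i',M_i''$ is a multilinear form in a strict nonempty subset of the variables and $r\le r(\eta,d)$. I would induct on $d$: fix $h_2,\ldots,h_d$ so that $L(h_1):=M(h_1,h_2,\ldots,h_d)$ is a linear form in $h_1$; the bias assumption forces $L\equiv 0$ for a non-negligible fraction of $(h_2,\ldots,h_d)$, which is a multilinear relation on the remaining variables that one can iteratively exploit to build the decomposition with constants independent of $n$.

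Finally, since $d<|\F|$, the integer $d!$ is invertible in $\F$, and the polarization identity $d!\cdot p_d(x)=M(x,x,\ldots,x)$ recovers $p_d$ from $M$. Specializing each variable of $M$'s decomposition to $x$ yields $p_d=(d!)^{-1}\sum_{i=1}^{r}q_i'(x)q_i''(x)$ where each $q_i',q_i''$ has degree strictly less than $d$, so $p_d$ is a function of at most $2r$ polynomials of degree $\le d-1$. The remainder $p-p_d$ is itself a single polynomial of degree $\le d-1$, giving $rank_{d-1}(p)\le 2r+1=:c(\delta,d)$. The technical heart is the middle step: a naive induction accumulates pigeonhole losses that depend on $n$, so one must design a careful regularity / density-increment iteration to ensure that the final bound depends only on $\eta$ and $d$. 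The last step is where the hypothesis $d<|\F|$ is essential — polarization breaks down in small characteristic, which is precisely the gap addressed by the present paper over general fields.
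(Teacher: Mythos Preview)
This statement is cited from \cite{gt07} rather than proved here; the paper's own contribution is the extension Theorem~\ref{thm:main}. Your reduction steps are sound: Gowers-norm monotonicity, the identification of the $d$-fold discrete derivative with the polarization $M$ of $p_d$, and the recovery $p_d(x)=(d!)^{-1}M(x,\ldots,x)$ using $d<|\F|$ are all correct. The gap is your step 4. You assert that a multilinear $M$ with $|bias(M)|\ge\eta$ has partition rank $\le r(\eta,d)$, and your own sketch concedes that the naive induction loses factors depending on $n$ and that a ``careful regularity / density-increment iteration'' is required. That iteration \emph{is} the content of the theorem: from $\P_{h_2,\ldots,h_d}[M(\cdot,h_2,\ldots,h_d)\equiv 0]\ge\eta$ one only learns that the common zero set of the $n$ $(d{-}1)$-linear forms $M(e_i,\cdot,\ldots,\cdot)$ has density $\ge\eta$, and converting this into a bounded partition-rank decomposition is precisely the analytic-rank-versus-partition-rank problem, which is not resolved by any straightforward induction on $d$. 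Tellingly, this paper derives its tensor statement (Theorem~\ref{thm:tensorrank}) \emph{from} the polynomial result, not the other way around.

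A secondary point of comparison: in the Green--Tao argument as described in this paper, the hypothesis $d<|\F|$ enters through interpolation inside the counting lemma that establishes near-independence of the evaluations $g_i(x+Y_I)$, not through polarization. Your plan relocates the use of $d<|\F|$ to the final polarization step; for that reorganization to work, the multilinear partition-rank bound in your step 4 would have to be proved characteristic-free, and you have not supplied that argument. Supplying it is essentially what the present paper accomplishes (for polynomials) via the new notion of strong regularity in Section~\ref{sec:regularity}.
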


In their paper, Green and Tao conjecture that the restriction $d <
|\F|$ can be removed, but their proof technique breaks down when $d
\ge |\F|$. Note that over large
fields things might behave
differently than over small fields. One important example is the
%
The {\em Inverse Conjecture for the Gowers Norm}. This conjecture
roughly says that if the $d$-derivative of a polynomial is biased
then that polynomial has a non-negligible correlation with some
polynomial of degree
$d-1$. The {\em Inverse Conjecture for the Gowers Norm} was proven
to be true over large fields by~\cite{gt07}, but was proven to be
false over small fields~\cite{gt07,lms}. One of the main tools used
for proving the conjecture over large fields was
Theorem~\ref{thm:gt}, that was proven over large
fields.

One could ask what is the case with the above theorem, whether it
remains true over smaller fields or it becomes false there. We show
that the~\cite{gt07} result is true over general fields. In this
respect, as opposed to the {\em Inverse Conjecture for the Gowers
Norm} case,
large and small fields behave similarly.

\subsection{Our Main Results}
Our first main theorem shows that bias imply low rank over general
fields.

\begin{thm} [Bias imply low rank for general fields ]\label{thm:main}
Let $p(X)$ be a degree $d$ polynomial over $\F^n$, s.t. $bias(p) \ge
\delta > 0$. Then
$rank_{d-1}(p) \le c(d,\delta)$. That is, there exist
degree-$(d-1)$ polynomials $q_1(X),...,q_c(x)$, and a function
$F:\F^c \to \F$, s.t. $p(X) = F(q_1(X),...,q_c(X))$, and
$c=c(d,\delta)$. Moreover, $q_1,...,q_c$ are derivatives of the form
$p(X+a)-p(X)$ where $a \in \F^n$.
\end{thm}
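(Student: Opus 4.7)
The plan is to prove Theorem~\ref{thm:main} by induction on the degree $d$. The base case $d=1$ is immediate: a biased linear polynomial over $\F$ must be constant, so its rank is $0$. For the inductive step I fix $p$ of degree $d$ with $bias(p) \ge \delta$ and assume the statement holds for all smaller degrees.

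The starting point is the standard Cauchy--Schwarz identity
\[
  |bias(p)|^2 \;=\; \E_{a \in \F^n}\bigl[bias(p_a)\bigr],
\]
where $p_a(X) := p(X+a) - p(X)$ has degree at most $d-1$. Hence at least a $\delta^2/2$-fraction of directions $a$ satisfy $|bias(p_a)| \ge \delta^2/2$; after a constant shift (which does not affect the rank) each such derivative is itself a biased degree-$(d-1)$ polynomial. By the inductive hypothesis applied to each of them, every biased $p_a$ is a function of at most $K := c(d-1,\delta^2/2)$ of its own first-order derivatives $(p_a)_b(X) = p(X+a+b) - p(X+a) - p(X+b) + p(X)$; these are second-order derivatives of $p$ of degree at most $d-2$. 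So we have a rich, structured supply of low-rank degree-$(d-1)$ derivatives of $p$.

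The remainder of the argument is an iterative/greedy construction: maintain a list $S_k = \{a_1,\dots,a_k\}$ and stop when $p$ is a function of $p_{a_1},\dots,p_{a_k}$. If it is not, there exist $X,Y\in\F^n$ with $p_{a_i}(X) = p_{a_i}(Y)$ for all $i\le k$ but $p(X)\ne p(Y)$. Writing $s = Y-X$, this translates to: the second-order derivatives $\partial_s p_{a_i}(X)$ all vanish while $p_s(X)\ne 0$. I would use such a witness $s$ (together with the bias of $p_s$-type combinations coming from the large set $A$) to produce a new direction $a_{k+1}$ whose addition strictly decreases a potential function --- naturally the conditional entropy
\[
  \Phi_k \;:=\; H\bigl(p(X)\,\big|\,p_{a_1}(X),\dots,p_{a_k}(X)\bigr),
\]
which starts at most $\log|\F|$ and must reach $0$. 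To guarantee quantitative progress of at least some $\delta' = \delta'(\delta,d)$ per step, one combines the inductive hypothesis on the biased $p_a$'s with a ``regularity'' or ``genericity'' step that rules out unexpected algebraic dependencies among the chosen derivatives, so that conditioning on their values behaves close to uniformly.

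The main obstacle is this last point: reconciling what the inductive hypothesis gives (a low-rank certificate for each biased $p_a$ in terms of second-order derivatives of $p$) with what we need (a single low-rank certificate for $p$ in terms of first-order derivatives). Handling this requires a polynomial regularity lemma to be proved in parallel with the main induction --- the technical heart of the argument --- and careful quantitative bookkeeping so that the resulting constant $c(d,\delta)$ depends only on $d$ and $\delta$. I expect the bulk of the work, and the place where the small-field setting (where Green--Tao's approach breaks down) has to be genuinely re-examined, to lie in setting up this regularity step and verifying that the potential drop survives it.
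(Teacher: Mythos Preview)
Your outline diverges from the paper's argument in two essential ways, and in its current form it has a genuine gap.

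First, the opening move. You apply Cauchy--Schwarz to produce many biased first derivatives $p_a$ and then invoke the inductive hypothesis on each, obtaining that every such $p_a$ is a function of $O_K(1)$ second-order derivatives of $p$. But this information is never used downstream: your greedy scheme only needs candidate directions $a$, not any structural decomposition of $p_a$. The paper instead uses the Bogdanov--Viola lemma (Lemma~\ref{lemma:approximation}): from $bias(p)\ge\delta$ one directly gets a function of $s=s(\delta,\epsilon)$ derivatives $p_{a_1},\dots,p_{a_s}$ that \emph{equals} $p$ on a $(1-\epsilon)$-fraction of inputs. This immediately reduces the problem to upgrading an $\epsilon$-approximation to an exact computation (Lemma~\ref{lemma:computation}), and all subsequent work takes place there.

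Second, and more seriously, the entropy-decrement step is not substantiated. You assert that whenever $\Phi_k>0$ one can find $a_{k+1}$ with $\Phi_{k+1}\le\Phi_k-\delta'(\delta,d)$, but give no mechanism. The witness pair $(X,Y)$ you produce yields a single direction $s=Y-X$ on which $p_s(X)\ne0$ while the chosen $p_{a_i}$ do not separate $X$ from $Y$; adding $p_s$ to the list may decrease $\Phi_k$ by as little as $\Theta(|\F|^{-n})$. Nothing in the bias hypothesis on $p$, nor in the inductive low-rank structure of individual $p_a$'s, forces a uniform lower bound on the decrement. Moreover, exact computation corresponds to $\Phi_k=0$ on the nose, not merely $\Phi_k$ small, and an entropy argument alone does not bridge that last gap.

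What the paper actually does for Lemma~\ref{lemma:computation} is refine the approximating family $\{f_1,\dots,f_s\}$ to a \emph{strongly $\GF$-regular} factor $G=\{g_1,\dots,g_m\}$ (Definition~\ref{dfn:strongregularity}, Lemma~\ref{lemma:regularity}); this is the new notion replacing Green--Tao regularity and is precisely what makes the small-field case go through. The inductive hypothesis is invoked not on derivatives $p_a$ of $p$, but inside the proof of Lemma~\ref{lemma:belowdeltaindependent}, applied to a biased linear combination $h'$ of the $g_i$'s (which has degree $\le d-1$), to contradict strong regularity. With strong regularity in hand, one gets near-uniform regions, upgrades ``almost good'' regions to exactly good via a hypercube/counting argument (Lemmas~\ref{lemma:counting} and~\ref{lemma:noalmostconstantregions}), and finally handles the few remaining bad regions (Lemma~\ref{lemma:allregionsconstant}). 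None of this is an entropy-decrement or greedy-selection argument; the passage from approximate to exact is structural, not incremental.
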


Most of the technical part of the paper is dedicated to proving
Theorem~\ref{thm:main}. The proof will go by induction on the degree
$d$ of $p(X)$. Notice that for $d=1$ it holds trivially. So, we
assume Theorem~\ref{thm:main} to hold for all degrees smaller than
$d$, and prove it for degree $d$.

Our second main theorem is obtained as a corollary from
Theorem~\ref{thm:main}. The theorem is a worst case to average case
reduction for polynomials. It says that a polynomial that can be
approximated by few polynomials of bounded degree, can be computed
by few polynomials of bounded degree. We now move to define and
prove this rigorously.

\begin{dfn} [$\delta$-approximation]
We say a function $f:\F^n \to \F$ $\delta$-approximates $p(X)$ if:
$$
|\E_{X \in \F^n}[\omega^{p(X)-f(X)}]| \ge \delta
$$
\end{dfn}

\begin{thm}[Worst-case to average case reduction for polynomials of bounded degree]\label{thm:avgtoworstpolys}
Let $p(X)$ be a polynomial of degree $d$, $g_1,...,g_c$ polynomials
of degree $k$, ($d,c,k = O(1)$) and $F:\F^c \to \F$ a function s.t.
the composition $G(x) = F(g_1(X),...,g_c(X))$ $\delta$-approximates
$p$. Then there exist $c'$ polynomials $h_1,...,h_{c'}$ and a
function $F':\F^{c'} \to \F$ s.t.
$$
F'(h_1(X),...,h_{c'}(X)) \equiv p(X)
$$
Moreover, $c'=c'(d,c,k)$ (i.e. independent of $n$) and each $h_i$ if
of the form $p(X+a)-p(X)$ or $g_j(X+a)$ for $a \in \F^n$.
In particular, if $k \le d-1$ then also $\deg(h_i) \le d-1$.
\end{thm}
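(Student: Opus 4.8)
Proof proposal for Theorem \ref{thm:avgtoworstpolys} (Worst-case to average-case reduction)

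The plan is to deduce this from Theorem \ref{thm:main} by a bootstrapping argument on the "error polynomial." Set $e(X) = p(X) - G(X)$ where $G(X) = F(g_1(X),\dots,g_c(X))$. The hypothesis says $|\mathrm{bias}(e)| = |\E_X[\omega^{e(X)}]| \ge \delta$. Now $e$ is itself a polynomial (a difference of polynomials), of degree at most $D := \max(d, \deg G) \le \max(d, (|\F|-1)k)$ — in any case a bounded quantity. Since $e$ is biased, Theorem \ref{thm:main} gives $\mathrm{rank}_{D-1}(e) \le c(D,\delta)$: there are degree-$(D-1)$ polynomials, each a derivative $e(X+a) - e(X)$, together with a function expressing $e$ in terms of them. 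But a derivative of $e$ is $e(X+a)-e(X) = [p(X+a)-p(X)] - [G(X+a)-G(X)]$, and $G(X+a)-G(X) = F(g_1(X+a),\dots)-F(g_1(X),\dots)$ is itself computed from the polynomials $g_j(X+a)$ and $g_j(X)$. So each derivative of $e$ is computed by a bounded number of polynomials of the form $p(X+a)-p(X)$ or $g_j(X+b)$, and hence so is $e$ itself. Finally $p(X) = G(X) + e(X)$ is computed from $e$ together with the $g_j(X)$'s, so $p$ is computed by a bounded collection of polynomials of the promised forms, giving $c' = c'(d,c,k)$.

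The step I expect to require the most care is controlling the degrees so that the "$k \le d-1 \Rightarrow \deg h_i \le d-1$" clause holds. When $k \le d-1$ we have $\deg G \le (|\F|-1)k$, which can exceed $d-1$, so I cannot simply apply Theorem \ref{thm:main} to $e$ at its ambient degree and be done — the derivatives $e(X+a)-e(X)$ could a priori have degree larger than $d-1$. The fix is to iterate: apply Theorem \ref{thm:main} not to $e$ but directly, noting that we only need a bounded-rank representation of $p$ in terms of polynomials that are already "available" ($p$-derivatives, which have degree $\le d-1$, and translates of the $g_j$, which have degree $\le d-1$ under the hypothesis $k \le d-1$). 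Concretely, I would induct on $\deg G$: if $\deg G > d-1$, pick a top-degree derivative direction and replace $G$ by $G(X+a)-G(X)$ to lower its degree while keeping track that this amounts to taking a derivative of $p$ on the other side; repeat until everything in sight has degree $\le d-1$, then apply Theorem \ref{thm:main} one final time at degree $d$.

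An alternative and cleaner route: observe that $e = p - G$ has the property that $p = e + G$, and instead of analyzing $e$ at its full degree, note that $G$ is already a bounded combination of bounded-degree polynomials (the $g_j$), so it suffices to show $p$ is a bounded combination of bounded-degree polynomials. Apply Theorem \ref{thm:main} to $e$ to get $e$ as $\tilde F$ of finitely many derivatives $e(X+a_i)-e(X)$; each such derivative, being a derivative, has degree at most $\deg(e)-1$, but more usefully each derivative of $e$ equals a derivative of $p$ minus a derivative of $G$, and derivatives of $G$ are bounded combinations of translates $g_j(X+b)$. Unfolding once and collecting all polynomials appearing — the $p(X+a)-p(X)$, the $g_j(X+b)$, and $p$ itself via $p = e + G$ — yields the representation. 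The only subtlety is again the final degree bookkeeping for the "in particular" clause, which follows because $p(X+a)-p(X)$ always has degree $\le d-1$ and, when $k\le d-1$, every $g_j(X+b)$ has degree $\le d-1$ as well, so $p$ — being genuinely a polynomial of degree $d$ — is exhibited as $F'$ of degree-$(\le d-1)$ polynomials, which is exactly the statement.
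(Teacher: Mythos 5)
Your proposal is essentially correct, and your ``alternative and cleaner route'' is the one that actually works; the first attempted fix (inducting on $\deg G$ by repeatedly derivating $G$) is unnecessary and, as you yourself sensed, hard to control. The worry that launched it --- that derivatives of $e = p - G$ may have degree $> d-1$ --- is a red herring: the final $h_i$'s are not those derivatives but the $p$-shifts and $g_j$-shifts you obtain after unfolding, whose degrees are $\le d-1$ and $\le k$ respectively, so the ``in particular'' clause falls out once you unfold.

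The paper's proof takes a slightly different and slicker path. Rather than setting $e = p - F(g_1,\dots,g_c)$ (a polynomial whose degree can blow up to roughly $c(|\F|-1)k$, not $(|\F|-1)k$ as you wrote, though still bounded), the paper Fourier-expands $\omega^{F}$ over $\F^c$. Because $|\E_X[\omega^{p - F(g_1,\dots,g_c)}]| \ge \delta$, some single Fourier character must contribute, yielding a \emph{linear} combination $p' = p - \sum_j \alpha_j g_j$ with bias at least $\delta|\F|^{-c}$. This $p'$ has degree only $\max(d,k)$, so Theorem~\ref{thm:main} is applied at degree $d$ (when $k \le d-1$), the degree bookkeeping is immediate, and the quantitative dependence of $c'$ on the parameters is much better than applying Theorem~\ref{thm:main} at the inflated degree $\max(d, c(|\F|-1)k)$ as your route does. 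Both arguments are sound; the Fourier step is exactly the device that lets one trade the arbitrary outer function $F$ for a linear combination, which is what you were fumbling for in your first fix. One small point shared by both your write-up and the paper: the hypothesis only gives $|\mathrm{bias}(e)| \ge \delta$ with a complex phase, while Theorem~\ref{thm:main} is stated for real positive bias; this is harmless since one can absorb the phase by adding a constant to $e$, but it deserves a word.
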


\begin{proof}
Develop $\omega^{F(z_1,...,z_c)}:\F^c \to \C$ in the Fourier basis.
If $F(g_1(X),...,g_c(X))$ $\delta$-approximates $p(X)$, there must
exist some Fourier coefficient which $\delta'$-approximates $p$
($\delta' \ge \delta |\F|^{-c}$). That means, there exist
$\alpha_1,...,\alpha_c \in \F$ s.t. the polynomial
$$
p'(x) = p(x) - (\alpha_1 g_1(x) + ... \alpha_c g_c(x))
$$
has bias at least $\delta'$. Using Theorem~\ref{thm:main} we get
that there must exist at most $c'$ derivatives
of $p'$ which computes it. We can now use them and $\alpha_1 g_1 +
... \alpha_c g_c$ to compute $p$.
\end{proof}

\section{Significance of Results}\label{sec:significance}

\paragraph{Bias imply low rank over general fields.}
Out first main theorem
(Theorem~\ref{thm:main}) shows that over general
fields there is a phenomena that bias imply low rank. Green and Tao
\cite{gt07} proved this for large fields. They left the case of
small fields open. We answer their question affirmatively by showing
that the "bias imply low rank" phenomena is robust and holds for all
fields.

\paragraph{Worst case to average case reductions for polynomials.}

Our second main theorem
(Theorem~\ref{thm:avgtoworstpolys}) shows that every polynomial,
not necessarily biased, that is approximated by few other bounded
degree polynomials, can be
computed by few bounded degree polynomials. We view this result as a
worst case to average case reduction for polynomials. I.e. in order
to show that a polynomial can not be approximated by few bounded
degree polynomials, it would be sufficient to show that
the polynomial
can not be
calculated by few bounded degree
polynomials. That later task might be easier. An example when such a
scenario is relevant is the following. The
papers~\cite{gt07,lms} that disprove the
{\em Inverse Conjecture for the Gowers Norm} needed to show that the symmetric polynomial $S_4$ over
$\F_2$, i.e. $S_4(x_1,...,x_n) = \sum_{i<j<k<l} x_i x_j x_k x_l$ can
not be approximated by few degree $3$ polynomials. Given the current
result it could be sufficient (and maybe easier?) to show that $S_4$
can not be computed by few degree $3$ polynomials.


\paragraph{On the power of induction and relation to pseudorandom generators.}
Pseudorandom generator for polynomials of degree-$d$ is an efficient
procedure that stretches $s$
field elements into $n\gg s$ field elements that can fool any
polynomial of degree $d$ in $n$ variables. Pseudorandom generators
are mostly interesting over small fields. One can use our first main
theorem to provide an alternative proof to the correctness of the
pseudorandom generators of~\cite{bv} that fool degree $d$
polynomials. The argument used by~\cite{bv} relied on the {\em Gower
Inverse Conjecture} which turned out to be false for small
fields~\cite{gt07,lms}. However, a better inspection of
the~\cite{bv} argument shows that for proving the correctness of
their pseudorandom generators one can only rely on the statement
that "bias imply low rank" which we prove here.

The "bias imply low rank" idea suggests a robust way to
construct pseudorandom generators for some complex function classes
based on pseudorandom generators for simpler function classes. The
last would be done by the following methodology. Either you are
unbiased in which case you could fool whoever you wanted to fool, or
you are a function of few functions of lower complexity, so by {\em
induction} we obtain a construction of pseudorandom generator for
functions of higher complexity classes (e.g. degree $d$ polynomials)
given pseudorandom generators for functions of lower complexity
classes (e.g. linear functions).

\paragraph{Relation to testing concise representations.}
Diakonikolas et al.~\cite{r} suggest a general methodology to test
whether a function on $n$ variables has a concise representation.
The idea is to do testing by implicit learning. Their work provides
property testers for several concise structures among them are
$s$-sparse polynomials, size-$s$ algebraic circuits and more.
Consider the following concise representation of degree $d$
polynomials. A polynomial of degree $d$ has a concise representation
if it is a function of few polynomials of lower degree (i.e. if it
has a low rank). We argue that one can use the "bias imply low rank"
theorem in order to construct a tester that test for this concise
representation. The tester first performs a low degree testing e.g.
by~\cite{rs} to test that the given polynomial is of degree at most
$d$, if the degree-tester rejects the tester rejects otherwise, the
tester would approximate the bias of the polynomial. If the bias is
large then by our first theorem the polynomial has low rank and the
tester accepts, otherwise it rejects. The idea behind our approach
for testing this concise representation of polynomials is robust in
the following sense. It suggests a methodology for testing concise
representation of some family (e.g depth $d$ circuits) given a
membership tester for that family, and given that the family obeys
the "bias imply low rank" principle. If these two conditions are
met, one can construct a tester. The tester first test membership in
the family and then estimate the bias. If the bias is high the rank
is low and concise representation exists.


\paragraph{Extension to tensors}
Let $L(x,y)$ be a bilinear form over $\F^n$, i.e. a function of the form
$$
L(x,y) = x^t A y
$$
where $x,y \in \F^n$ and $A$ is a matrix.
There is a close connection between the rank of the matrix and the bias
of $L$. Dixon's Theorem
(\cite{ms})
tells us that the bias of $L$ (and in fact, all non-zero Fourier coefficients
of $L$) has absolute value $c(\F)^{-rank(A+A^t)}$.
The theory of higher dimensional multilinear forms, i.e. tensors, is much
less understood. In particular, there is no single notion of tensor rank.
We prove, as a direct corollary of Theorem~\ref{thm:main},
that if we define the rank of a tensor as minimal number of lower degree
multilinear forms needed to compute it, then bias imply low rank for tensors.
\begin{thm}\label{thm:tensorrank}
Let $L(X_1,...,X_d)$ be a multilinear form of degree $d$ s.t. $bias(L) \ge \delta > 0$.
Then, there exist degree-$(d-1)$ multilinear forms $q_1,...,q_c$, each operating on
$d-1$ variables out of $X_1,...,X_d$, and a function
$F:\F^c \to \F$, s.t.
$$L(X_1,...,X_d) = F(q_1(X_1,...,X_{t_1-1},X_{t_1+1},...,X_d),...,q_c(X_1,...,X_{t_c-1},X_{t_c+1},...,X_d))
$$
and $c=c(d,\delta)$. Moreover, $q_1,...,q_c$ are derivatives of $L$.
\end{thm}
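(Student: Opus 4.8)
The plan is to derive Theorem~\ref{thm:tensorrank} by applying Theorem~\ref{thm:main} to $L$ viewed as an ordinary degree-$d$ polynomial, and then converting the directional derivatives it produces into multilinear forms on $d-1$ of the argument blocks. Write $X=(X_1,\dots,X_d)$ with $X_i\in\F^{n_i}$ and regard $L$ as a polynomial of degree $d$ in the $N:=n_1+\dots+n_d$ underlying coordinates (if $L\equiv 0$ there is nothing to prove). Since $bias(L)\ge\delta>0$, Theorem~\ref{thm:main} supplies a constant $c=c(d,\delta)$, shift vectors $a^{(1)},\dots,a^{(c)}\in\F^N$, and a function $F:\F^c\to\F$ with
$$
L(X)=F\big(D_{a^{(1)}}L(X),\dots,D_{a^{(c)}}L(X)\big),\qquad D_aL(X):=L(X+a)-L(X).
$$

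The next step is to split each directional derivative into homogeneous block-parts using multilinearity. Writing $a^{(j)}=(a^{(j)}_1,\dots,a^{(j)}_d)$ with $a^{(j)}_i\in\F^{n_i}$ and expanding $L(X_1+a^{(j)}_1,\dots,X_d+a^{(j)}_d)$ by linearity in each block, one gets
$$
D_{a^{(j)}}L(X)=\sum_{\emptyset\neq S\subseteq[d]}M^{(j)}_S(X),\qquad M^{(j)}_S(X):=L(Y^S_1,\dots,Y^S_d),\quad Y^S_i=\begin{cases}a^{(j)}_i,&i\in S,\\ X_i,&i\notin S.\end{cases}
$$
Each $M^{(j)}_S$ is a multilinear form depending only on the $d-|S|\le d-1$ blocks outside $S$, so it operates on at most $d-1$ of the variables; for $|S|=1$ it has degree exactly $d-1$ and equals the single-block derivative $D_{(0,\dots,a^{(j)}_i,\dots,0)}L$, while for $|S|\ge 2$ it is the iterated derivative $D_{(0,\dots,a^{(j)}_{i_k},\dots,0)}\cdots D_{(0,\dots,a^{(j)}_{i_1},\dots,0)}L$ of $L$ (and, by Möbius inversion over the subsets of $S$, also a fixed $\pm1$ combination of honest directional derivatives $D_{a^{(j)}|_T}L$, $\emptyset\neq T\subseteq S$). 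Substituting the displayed expansion into $F$ expresses $L$ as a function $\widehat F$ of the $(2^d-1)c$ forms $\{M^{(j)}_S\}$; since $(2^d-1)c$ depends only on $d$ and $\delta$, renaming these forms as $q_1,\dots,q_{c'}$ with $c'=c'(d,\delta)$ gives the statement.

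I do not anticipate a real obstacle: the entire substance is in Theorem~\ref{thm:main}, and what remains is the multilinear bookkeeping above. The only points needing care are cosmetic matches to the phrasing of the conclusion: "degree-$(d-1)$" has to be read as "degree at most $d-1$" (consistently with the rest of the paper, since a derivative of a degree-$d$ polynomial need not be exactly of degree $d-1$), and "derivatives of $L$" must be allowed to include iterated or directional derivatives rather than only expressions of the syntactic form $L(X+a)-L(X)$. If one insists on the latter, keep instead the single-block derivatives $D_{(0,\dots,a^{(j)}_i,\dots,0)}L$ for all $i\in[d]$ and all $j$, together with their iterated single-block derivatives --- still a family of size depending only on $d$ and $\delta$ --- and recover each $M^{(j)}_S$, hence $L$ itself, from this family by inclusion--exclusion.
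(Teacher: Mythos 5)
Your proof is correct and is essentially the same argument as the paper's (the paper's own proof is a single sentence: apply Theorem~\ref{thm:main} to $L$ as a degree-$d$ polynomial and note that each derivative $L(X+a)-L(X)$ decomposes, by multilinearity, into a constant number of multilinear forms each missing at least one block $X_t$). You spell out the decomposition into the $2^d-1$ pieces $M^{(j)}_S$ indexed by nonempty $S\subseteq[d]$, whereas the paper groups these into $d$ pieces (e.g.\ by $\min S$), but either grouping gives a constant depending only on $d$ and $\delta$. Your cosmetic caveats are also right: the $M^{(j)}_S$ with $|S|\ge 2$ have degree $d-|S|<d-1$, so "degree-$(d-1)$" must be read as "degree at most $d-1$," and "derivatives of $L$" must be read loosely (iterated block-derivatives, or honest $D_aL$'s up to inclusion--exclusion), exactly as you note.
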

\begin{proof}
We use Theorem~\ref{thm:main} on $L$ as a degree $d$-polynomial, and observe
that derivatives of $L$ are sums of $d$ degree-$(d-1)$ multilinear forms in $d-1$ variables of $X_1,...,X_d$.
\end{proof}

\subsection{Proof Overview}
The proof starts by a lemma of Bogdanov and Viola, showing that if a degree-$d$ polynomial
$p(X)$ has bias, then we can build a constant-size circuit which approximates it, whose inputs are degree-$(d-1)$
polynomials (and in fact derivatives of $p$).

The technical heart of the paper is the proof of the following
statement (Lemma~\ref{lemma:computation}): A biased polynomial of
degree $d$ that is approximated by few degree $d-1$ polynomials can
be computed by few degree $d-1$ polynomials.

In general our proof structure is similar in spirit to that
of~\cite{gt07}, however, there is a clear distinction between the
two approaches that enable us to obtain the stronger result. The
proof is by induction on the degree. An important notion in the
proof is the following definition of a factor.

\begin{dfn}[Factor] A factor is a set of polynomials
$g_1,...,g_m:\F^n \rightarrow \F$.
The number of polynomials in the set $m$ is the {\em dimension} of
the factor. The maximum degree
is the {\em order} of the factor. The polynomials of the factor
divide the hyper-cube into $|\F|^m$ parts according to their joint
image. Each such part is called a {\em region}.
\end{dfn}

\paragraph{All regions are of almost same size.}
The first step of the proof shows that given a set of polynomials of
degree at most $d-1$ that approximates a degree-$d$ polynomial $p$,
i.e. given a factor that approximates $p$, one can transform that
factor into another factor of constant size that approximate $p$ in
in which all of the regions are roughly of the same size. In order
to obtain that we need the following definition of regularity.

\begin{dfn} [Regularity - informal]
A factor is regular if the joint distribution of its polynomials is
close to uniform. see formal definition in Definition~\ref{dfn:regularity}.
\end{dfn}

The Regularity Lemma (see Lemma~\ref{lemma:regularity}) shows that
given a factor of constant dimension that approximate $p$, it can be
transformed into another constant dimension factor which is regular
and approximate $p$. Moreover, in a regular factor all regions are
roughly of the same size (see Lemma~\ref{lemma:regionsuniform}).

\paragraph{Averaging arguments.}
Since we know that all regions are roughly of same size we can use
averaging arguments to claim that most regions have large agreement
with the polynomial
$p$. These are denoted as {\em almost good regions}. The rest of the
regions are denoted as {\em bad regions} but there are only few of
them. We then show that almost good regions are good (i.e. they
fully agree with $p$, see
Lemma~\ref{lemma:noalmostconstantregions}). We further show that $p$
must be fixed/constant over bad regions
(Lemma~\ref{lemma:allregionsconstant}). Hence we get that the
polynomial $p$ is a function of the factor, and can be computed (and
not only approximated) by the factor. So the heart of the proof is
to show that almost good regions are good and that $p$ is constant
on the few bad regions.

\paragraph{Almost good regions are good, and $p$ is fixed on the rest - a wishful scenario.}
For a set of variables $Y_1,Y_2,... \in \F^n$ we denote by
$Y_I = \sum_{i \in I}Y_i$. Since $p(x)$ is a polynomial of degree
$d$ it
satisfies cube constraints of the following form:
$$
p(x) = \sum_{I \subseteq [d+1], |I|>0} (-1)^{|I|+1} p(x + y_I)
$$

We show that for every point $x$ that belongs to an {\em almost good
region}, there is a constraint that pass through it and all its
other points (i.e. points of the form $x + y_I$ for $|I|>0$) belong
to the good part of the same region. As all the values $p(x +
y_I)=c$ for all good points in the region, we get that also
$p(x)=c$. Hence we get that the value $p(x)$ is constant on the
region for every $x$.

A question of interest here is given $x$ what is the probability
that $x+y_I$ is in the region of $x$ for every $I$. Since the
assignment of points to the region is determined by the values of
the polynomials
$g_1, \cdots, g_m$ that compose the factor, saying that for every
$I$, $x+y_I$ is in the region of $x$ is equivalent to the following
condition.
$$[g_i(x)=g_i(x+y_I) \mbox{ for all } i \in [m] \mbox{ and } I \subseteq [d+1]]$$

One can observe that for this condition to hold, due to the
dependence between derivatives, it is sufficient to require the
following.

$$[g_i(x)=g_i(x+y_I) \mbox { for all }i \in [m] \mbox{ and }I \subseteq [d+1] \mbox{
s.t. }  1 \leq |I| \leq \deg(g_i)]$$

Note that this condition by itself is not sufficient to ensure that
$p(x)$ is assigned the value of the good part of the region. One
need also to add the requirement that non of the $x+y_I$ fall in the
bad part of the almost good region. Once making
the calculations (Lemma~\ref{cor:jointprob}) one can realize that if
all the events that compose the above conditions were independent
then $p(x)$ would have get the correct value. Hence we could have
say that all the almost good regions are totally good
(Lemma~\ref{lemma:noalmostconstantregions}). So, $p$ agrees with the
factor on all the regions but few. Given this, we show that $p$ is
fixed also on the bad regions (Lemma~\ref{lemma:allregionsconstant})
However, the above arguments work under a wishful assumption that
all the considered events are independent. Much of the technical
effort of this work goes into showing that the joint distribution of
these events is close to being uniform, i.e. the events are almost
independent.

\paragraph{Obtaining almost independence through interpolation.}
One way to prove the almost-independent argument is to relate the bias
from independence to the bias of the $d$-derivatives of the
$g_i$'s that compose the factor. Using interpolation one can relate
the bias of the $d$-derivative of the
$g_i$'s to the bias of the
$g_i$ which should be small by the assumption about the regularity
of the factor. The use of interpolation in the above argument is
absolutely crucial, and that what allow~\cite{gt07} to get a result
only for for the case $d < |\F|$.

\paragraph{Our approach for dealing with general fields.}
In order to eliminate the need for interpolation that could hold
{\bf only} over large fields and in order to be able to claim that
$p$ is indeed fixed on bad regions we define a stronger notion of
regularity. The stronger regularity roughly requires uniformity of
the $d$-derivatives of the polynomials in the factor

\begin{dfn} [Strong Regularity - informal]
A factor is strongly regular if the joint distribution of the
$d$-derivatives its polynomials is close to uniform. see formal
definition in Definition~\ref{dfn:strongregularity}.
\end{dfn}

Based on this stronger regularity we prove counting lemmas
(Lemmas~\ref{lemma:belowdeltainducesall} and
~\ref{lemma:belowdeltaindependent}) that enable us to get almost
independence in the above sense without the need for interpolation.
Thus, we obtain our results for general fields.
In the following we discuss the usefulness of strong regularity.
Using the strong regularity we allow all polynomial $g_1,...,g_m$ to
participate in the calculation of $g_i(x+Y_I)$, in contrast to the
definition of Green and Tao which required only evaluations of the
same $g_i$. This gives raise to the notion of an "independence
degree" of a polynomial $\Delta(g_i)$ (which is between $1$ and
$\deg(g_i)$, and could be strictly lower than $\deg(g_i)$ given the
other derivatives in the factor). Thus, instead of requiring a
uniform joint distribution over the following set of events as Green
and Tao do:
$$[g_i(x)=g_i(x+y_I) \mbox { for all }i \in [m] \mbox{ and }I \subseteq [d+1] \mbox{
s.t. }  1 \leq |I| \leq \deg(g_i)]$$

We could only required uniform joint distribution over a subset of
the events, that is over:

$$[g_i(x)=g_i(x+y_I) \mbox { for all }i \in [m] \mbox{ and }I \subseteq [d+1] \mbox{
s.t. }  1 \leq |I| \leq \Delta(g_i)]$$

It turns out that this stronger notion of independence allows us to
get almost independence between the variables, without the need of
integration, which makes our result work for every field.

In the following we show an example that $\Delta(g_i)$ could be
strictly smaller then $\deg(g_i)$.

\begin{ex}
Consider the symmetric polynomial $S_4$ over $\F_2$, i.e.
$$
S_4(x_1,...,x_n) = \sum_{i<j<k<l} x_i x_j x_k x_l
$$
Consider the fourth derivative of $S_4$, i.e. the polynomial in
$X,Y_1,...,Y_4$
$$
G(X,Y_1,...,Y_4) = \sum_{I \subseteq [4]} S_4(X+Y_I)
$$
This polynomial corresponds to the $4$-th Gowers Norm of $S_4$, and
as was shown in $\cite{gt07}$ and $\cite{lms}$, it has bias $1/8$.
In particular, it cannot be independent (and so we would have
defined $\Delta(S_4)$ to be at most $3$).
\end{ex}

The strong regularity for polynomials that we define here might find
some future applications.

\subsection{Organization}
The rest of the paper is organized as follows. We define required
notation in Section~\ref{sec:prelim}. We define and analyze regular
and strongly regular factors in Section~\ref{sec:regularity}. We show
that strong regularity implies almost independence in Section~\ref{sec:almost_ind_by_reg}. We prove
Theorem~\ref{thm:main} in Section~\ref{sec:approxtocalc}.

\section{Preliminaries}\label{sec:prelim}
$\F$ if a fixed prime field. We work with constant degree
polynomials over $\F^n$. We denote by capital letters $X,Y,...$
variables in $\F^n$, and by small letters $x,y,a,...$ values in
$\F^n$. Degree of a polynomial will always mean total degree. Unless
otherwise specified, when we speak of a degree $d$ polynomial, we
mean in fact a polynomial of total degree at most $d$. For a set of
variables $Y_1,Y_2,... \in \F^n$ we denote by $Y_I = \sum_{i \in
I}Y_i$, and similarly for a set of values $y_1,y_2,... \in \F^n$. We
write $u = v (1 \pm \epsilon)$ for $u \in [v(1-\epsilon),
v(1+\epsilon)]$. When we speak of a {\em growth function}, we mean
any monotone function $\GF:\N \to \N$ (for example, $\GF(n) =
2^{n^2}$).

\begin{dfn}[Derivative space of a polynomial]
For a polynomial $f(X)$, we define its derivative space to be the
set
$$
Der(f) = \{f(X+a)-f(X): a \in \F^n\}
$$
\end{dfn}
Notice that if $\deg(f)=k$ then all polynomials in $Der(f)$ have
degree at most $k-1$.

\section{Regularity of polynomials}\label{sec:regularity}
As we discussed in the introduction, the notion of regularity plays
a major rule in our proof. Green and Tao in~\cite{gt07} suggested
one notion of regularity (we refer to it henceforth as {\em
regularity}) which limited their proof to work only for large fields
(i.e. $d <|\F|$). We suggest a stronger notion of regularity (noted
henceforth as {\em strong regularity}). This new notion of strong
regularity is essential for obtaining a result for general fields.
In the following we review the regularity definitions given by Green
and Tao. Then, we present the notion of strong regularity and show
that every constant factor that approximates a polynomial $p$ can be
transformed into a constant factor that approximates $p$ and is also
strongly regular. We end this section by showing that strong
regularity implies almost independence for sets of variables that
forms some specific structures. This almost independence is the crux
of the proof of Theorem~\ref{thm:main}.


\begin{dfn}[Regularity of polynomials]\label{dfn:regularity}
Let $\GF$ be any growth function. A set of polynomials
$\{g_1,...,g_m\}$ is called $\GF$-regular if
any linear
combination $\alpha_1 g_1(X) + ... \alpha_m g_m(X)$ cannot be
expressed as a function of at most $\GF(m)$ polynomials of degree
$k-1$, where $k = \max\{\deg(g_i): \alpha_i \ne 0\}$ (i.e. $k$ is
the maximal degree of $g_i$ appearing in the linear combination).
\end{dfn}

Green and Tao also define the notion of a refinement of a set of
polynomials. Informally, a set $\{g_1,...,g_m\}$ is a refinement of
$\{f_1,...,f_s\}$ if for any $i \in [s]$, $f_i(x)$ can be computed
given the values of
$\{g_1(x),...,g_m(x)\}$.

\begin{dfn}[Refinement]\label{dfn:refinement}
A set of polynomials $\{g_1,...,g_m\}$ is a refinement of
$\{f_1,...,f_s\}$ if for any $i \in [s]$ there exists a function
$F_i:\F^m \to \F$ s.t.
$$
f_i(X) = F_i(g_1(X),...,g_m(X))
$$
\end{dfn}

Green and Tao prove that for any growth function $\GF$, any set of
polynomials
$F=\{f_1,...,f_s\}$ can be refined to a $\GF$-regular set
$\{g_1,..,g_m\}$, s.t. $m$ depends only on $s$, $\GF$ and the
maximal degree
in $F$. Importantly, $m$ is independent of $n$. Green and Tao proof
start by a set $\{f_1,...,f_s\}$ which approximates $p(X)$ weakly,
transform it to a set which approximates $p$ on almost all points,
then use the regularity condition to show that it must in fact
compute $p$ exactly. In order to prove this, they need to analyze
the joint distribution of
$$
\{g_i(X + \sum_{t \in I} Y_t): i \in [m], I \subseteq [D]\}
$$
where $D=O(d)$, $X,Y_1,...,Y_D \in \F^n$ are independent variables.
Lets denote by $Y_I = \sum_{i \in I} Y_i$. They prove that if we
just look on the subset
$$
\{g_i(x + Y_I): i \in [m], I \subseteq [D], |I| \le \deg(g_i)\}
$$
for any $x \in \F^n$, then these variables must be almost
independent, and for any $|I| > \deg(g_i)$, $g_i(x+Y_I)$ is
determined by $\{g_i(X+Y_J): J \subseteq I,\ |J| \le \deg(g_i)\}$.
Since the regularity requirement was for evaluations $\{g_i(X): i
\in [m]\}$ they needed to use integration over $\F$ to get the
independence result for evaluation on hypercubes, which limited
their proof only to $d < char(\F)$.

We follow a similar approach, but in order to allow for $d \ge
char(\F)$, we allow more freedom in the set of variables which are
almost independent or fixed given the others. For a set of
polynomials $g_1,...,g_m$, we also have a "independence degree"
$\Delta$ for every $g_i$ (which is between $1$ and $\deg(g_i)$).
Instead of requiring as Green and Tao do that:
$$
\{g_i(x + Y_I): i \in [m], I \subseteq [D], |I| \le \deg(g_j)\}
$$
are almost independent, we demand only that
$$
\{g_i(x + Y_I): i \in [m], I \subseteq [D], |I| \le \Delta(g_j)\}
$$
are almost independent. However, we also demand that for any $i \in
[m]$ and $|I| > \Delta(g_i)$, the value of $g_i(x + Y_I)$ can be
determined by $\{g_j(x + Y_J): J \subseteq I,\ |J| \le
\Delta(g_j)\}$. Notice that we allow all polynomial $g_1,...,g_m$ to
participate in the calculation of $g_i(x+Y_I)$, in contrast to the
definition of Green and Tao which required only evaluations of the
same $g_i$. It turns out that this stronger notion of independence
allows
us to get almost independence between the variables, without the
need of integration, which makes our result work for every field.

We now move to formally define our notion of strong regularity, and
to show it implies the almost independence/total dependence
structure we have just described. We first extend the definition of
a derivative space to several polynomials in several variable sets.

\begin{dfn}[Derivative space]
For a set of polynomials $F=\{f_1(X),...,f_s(X)\}$ we define:
$$
Der(F) = \{f_i(X+a)-f_i(X): i \in [s],\ a \in \F^n\}
$$
Similarly, for a set of polynomials in several variables
$F=\{f_1(Y_1,...,Y_k),...,f_s(Y_1,...,Y_k)\}$ ($Y_1,...,Y_k \in
\F^n$) we define:
$$
Der(F) = \{f_i(Y_1+a_1,...,Y_k+a_k)-f_i(Y_1,...,Y_k): i \in [s],\
a_1,...,a_k \in \F^n\}
$$
\end{dfn}

Notice that if the maximal degree of polynomials in $F$ is $k$, then
the maximal degree of polynomials in $Der(F)$ is at most $k-1$. We
now define strong regularity.

\begin{dfn}[Strong regularity of polynomials]\label{dfn:strongregularity}
Let $\GF$ be any growth function. Let $G=\{g_1,...,g_m\}$ be a set
of polynomials and $\Delta:G \to \N$ be a mapping from $G$ to the
natural numbers. We say the set $G$ is strong $\GF$-regular with the
degree bound $\Delta$ if:
\begin{enumerate}
  \item For any $i \in [m]$, $1 \le \Delta(g_i) \le \deg(g_i)$.

  \item For any $i \in [m]$ and $r > \Delta(g_i)$, let $X$ and $Y_1,Y_2,...,Y_r$ be variables in $\F^n$. There exist a function $F_{i,r}$ s.t.
      $$
      g_i(X + Y_{[r]}) = F_{i,r}\(g_j(X+Y_J): j \in [m],\ J \subseteq [r],\ |J| \le \Delta(g_j)\)
      $$

  \item For any $r \ge 0$, let $X$ and $Y_1,...,Y_r$ be variables in $\F^n$.
      Let $\{\alpha_{i,I}\}_{i \in [m], I \subseteq [r], |I| \le \Delta(g_i)}$ be any collection of field elements, not all zero. Let $a(X,Y_1,...,Y_r)$ stand for the linear combination:
      $$
      a(X,Y_1,...,Y_r) = \sum_{i \in [m], I \subseteq [r], |I| \le \Delta(g_i)} \alpha_{i,I} g_i(X+Y_I)
      $$

      Let $G' \subseteq G$ be the set of all $g_i$'s which appear in $a$, i.e.:
      $$
      G' = \{g_i \in G: \exists I\ \alpha_{i,I} \neq 0\}
      $$

      There \underline{does not} exist polynomials $h_1,...,h_l \in Der(G')$, $l \le \GF(m)$ s.t. $a(X,Y_1,...,Y_r)$ can be expressed as:
      $$
      H(h_1(X+Y_{I_1}),...,h_l(X+Y_{I_l}))
      $$
      for $I_1,...,I_l \subseteq [r]$ and some function $H:\F^l \to \F$.
\end{enumerate}

If the set $G$ satisfies only $(1)$ and $(2)$, we say $G$ is {\em
pre-strong-regular} (notice that $\GF$ appears only in $(3)$).
\end{dfn}

We first prove, similar to the proof in \cite{gt07}, that any set of
polynomials can be refined to a strong $\GF$-regular set, where the
size of the resulting set depends only on the size of the original
set, and the maximal degree of polynomials in it. Also, the refining
set is contained in the
space of iterated derivatives of the original polynomials.

We now formally define the space of iterated derivatives.
\begin{dfn}[Space of iterated derivatives]
For a polynomial set $F$, we define its iterated derivative set
$Der_C$ to be the set of taking at most $C$ derivatives of $F$, i.e.
$$
Der_0(F) = F
$$
$$
Der_C(F) = Der(Der_{C-1}(F)) \cup Der_{C-1}(F)
$$
\end{dfn}

\begin{lemma}[Strong-Regularity Lemma]\label{lemma:regularity}
Let $\GF$ be any growth function. Let $F=\{f_1,...,f_s\}$ be a set
of polynomials of maximal degree $k$. There exist a refinement
$G=\{g_1,...,g_m\}$ of $F$ s.t.
\begin{enumerate}
  \item The maximal degree of polynomials in $G$ is also at most $k$
  \item The set $G$ is strong $\GF$-regular.
  \item The size $m$ of $G$ is a function of only $\GF$, $s$ and $k$. Importantly, it
        is independent of $n$.
  \item There exists $C=C(\GF,s,k)$ s.t. $G \subseteq Der_C(F)$
\end{enumerate}
\end{lemma}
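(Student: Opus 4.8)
The plan is to follow the classical energy-increment (regularization) strategy of Green–Tao, but carefully tracking the extra bookkeeping forced by the independence-degree map $\Delta$ and by condition (2) of Definition~\ref{dfn:strongregularity}. I would proceed by an iterative refinement process. Start with $G \leftarrow F$ and set $\Delta(g_i) = \deg(g_i)$ for each $g_i$ (so that conditions (1) and (2) hold trivially at the outset: for $r > \deg(g_i)$ any $d$-th order derivative of a degree-$d$ polynomial vanishes, so $g_i(X+Y_{[r]})$ is the alternating sum of the $g_i(X+Y_J)$ with $|J| \le \deg(g_i)$ — this is exactly the ``cube constraint'' mentioned in the Proof Overview, and it certifies $G$ is pre-strong-regular). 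Now check condition (3): either it already holds, in which case we stop and $G$ is strong $\GF$-regular; or it fails, which means some linear combination $a(X,Y_1,\dots,Y_r)$ of shifted polynomials in a subset $G' \subseteq G$ is computable by $l \le \GF(m)$ polynomials $h_1,\dots,h_l \in Der(G')$ evaluated at shifts $X+Y_{I_j}$. The key observation is that each $h_j$ has degree strictly less than $\max\{\deg(g_i) : g_i \in G'\}$, and each $h_j \in Der(G')$, so setting one $Y$-block to a generic value $a \in \F^n$ and the rest to zero we obtain genuine $n$-variable polynomials $h_j(X+a) - h_j(X)$... actually more simply: the $h_j$ are themselves derivatives of the $g_i$, and restricting all variables appropriately exhibits a linear combination of the original $g_i$'s (the one defining $a$ with the $Y$'s specialized) as a function of $\le \GF(m)$ lower-degree polynomials from $Der(G) \subseteq Der(G \cup \{\text{iterated derivatives}\})$.

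The refinement step is then: replace $G$ by $G$ together with these $\le \GF(m)$ new polynomials $h_1,\dots,h_l$ (and possibly their specializations), and re-run the pre-regularization procedure (re-establishing conditions (1),(2) for the enlarged set by, if necessary, adding further derivatives and lowering the values of $\Delta$ on the polynomials involved so that (2) holds — this is always possible since condition (2) with $\Delta(g_i)$ small is a weaker demand, and we can add the needed lower-degree ``witness'' polynomials to $G$). Crucially, each time condition (3) fails, some polynomial in $G$ of the current maximal degree $k'$ gets expressed via lower-degree data, which we can encode as a drop in a suitable potential function — the standard choice being a vector-valued potential ordered lexicographically: first the maximal degree present, then (within that degree) the number of polynomials of that degree, or an ``energy'' $\Phi(G) = \sum_i \mathrm{rank}$-type quantity that strictly increases and is bounded. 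Since degrees are bounded by $k$ and the potential at each level is bounded, the process terminates after $C = C(\GF,s,k)$ rounds, independent of $n$. Because every polynomial ever added is obtained by taking a derivative of something already in $G$, and we took at most $C$ rounds, the final set satisfies $G \subseteq Der_C(F)$, which is conclusion (4); the size bound $m = m(\GF,s,k)$ in conclusion (3) follows because each round adds at most $\GF(m_{\text{prev}})$ polynomials plus a bounded number of derivative-witnesses, and there are at most $C$ rounds, so $m$ is obtained by iterating the bounded map $m \mapsto m + O(\GF(m))$ a bounded number of times.

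The main obstacle I anticipate is \emph{not} the termination/potential argument (that is routine once the right monovariant is chosen) but rather the interplay between condition (2) and condition (3) during the refinement step: when condition (3) fails and we enlarge $G$, we must re-establish condition (2) for the new set, which may require lowering $\Delta$ on old polynomials, which in turn changes which linear combinations condition (3) quantifies over — so one must be careful that the process does not ``chase its own tail.'' The resolution is to be generous: whenever we add a polynomial $h$, add \emph{all} of $Der(\{h\}) \cap \{\text{degree} \ge 1\}$ needed to make the relevant cube constraints expressible, and define $\Delta(g_i)$ to be the largest value for which condition (2) is verifiable with the \emph{current} $G$ — this is monotone non-increasing across rounds, giving a second component for the potential if needed. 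One should also double-check the base case and the observation that for a single polynomial $f$ of degree $k$, the set $\{f\} \cup Der(\{f\}) \cup \dots \cup Der_{k}(\{f\})$ (iterated derivatives down to constants) is automatically pre-strong-regular with the natural degree bounds — this grounds the induction and ensures condition (2) can always be met by adding boundedly many iterated derivatives.
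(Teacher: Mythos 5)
Your overall strategy (iterative refinement, potential decrease at each step, Ackermann-type termination, tracking that each round only takes one more derivative so $G \subseteq Der_C(F)$) matches the paper's. But there is a genuine gap in the refinement step itself, and one of the claims you make to paper over it is backwards.

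You write that ``condition (2) with $\Delta(g_i)$ small is a weaker demand,'' and so you can lower $\Delta$ freely when re-establishing (2) after enlarging $G$. This is the opposite of the truth. Smaller $\Delta(g_i)$ means condition (2) must hold for \emph{more} values of $r$ (every $r>\Delta(g_i)$), and simultaneously the right-hand side of (2) has \emph{fewer} allowed evaluations $g_j(X+Y_J)$ to build $F_{i,r}$ from, since the constraint $|J|\le\Delta(g_j)$ tightens. Lowering $\Delta$ is therefore a strengthening of (2), and it cannot be done at will; it requires a concrete witness. The paper's entire refinement mechanism is aimed at producing exactly that witness, and you never describe it: from the failure of (3), take $g_{i_0}$ of \emph{maximal degree} with some $\alpha_{i_0,I}\neq 0$ and take $I_0$ \emph{maximal under inclusion} with $\alpha_{i_0,I_0}\neq 0$, then \emph{substitute $Y_i=0$ for all $i\notin I_0$}. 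Because of the two maximality choices, this specialization kills every other term with a larger or incomparable index set and isolates $g_{i_0}(X+Y_{I_0})$ as a function of $\{g_{i_0}(X+Y_J):J\subsetneq I_0\}$, of $\{g_j(X+Y_J): j\neq i_0, |J|\le\Delta(g_j)\}$, and of shifts of the $h_j$'s (which have strictly smaller degree). \emph{That} identity is what licenses adding $h_1,\dots,h_l$ to $G$ (with $\Delta(h_j)=\deg(h_j)$, re-establishing (2) for them via the cube identity) and dropping $\Delta(g_{i_0})$ to $|I_0|-1$ without violating (2). Your ``add all of $Der(\{h\})$ and hope (2) holds'' plan does not supply this witness.

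The potential function you float is also not the one that works: ``number of polynomials of maximal degree'' need not decrease (you only remove $g_{i_0}$ when $\Delta(g_{i_0})$ hits $0$), and a ``rank-type energy'' is unspecified. The quantity that provably drops each round is $\sum\{\Delta(g_i): \deg(g_i)=d'\}$ at the relevant degree level $d'$, with the levels ordered lexicographically from high degree to low; the drop at level $d'$ is by at least $1$ while the polynomials introduced all live at strictly lower levels, and this is what gives the bounded (Ackermann-like) iteration count independent of $n$.

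\end{document}
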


\begin{proof}
We will start by defining a pre-strong-regular set $G$ from $F$, and
will keep refining it until we reach a strong $\GF$-regular set. Our
set $G$ will also be in $Der_i(F)$ at the $i$-th iteration. We will
finish by showing that the refinement process must end in a finite
number of steps.

We start by defining $\Delta:F \to \N$ by $\Delta(f_i) = \deg(f_i)$,
and set the initial value of $G$ to be $F$. To show that the initial
$G$ is
pre-strong-regular with the degree bound $\Delta$, observe that for
any $r > \deg(f_i)$, deriving $f_i$ $r$-times yields the zero
polynomial. Thus, if $Y_1,...,Y_r$ are variables, we have the
identity:
$$
f_i(X + Y_{[r]}) = \sum_{I \subsetneq [r]} (-1)^{r-|I|+1} f_i(X +
Y_I)
$$
Since we can do this for any $r > \deg(f_i)$, we can continue and
express $f_i(X+Y_{[r]})$ as a linear combination of $\{f_i(X+Y_I): I
\subseteq [r],\ |I| \le \deg(f_i)\}$. Thus, $G$ is
pre-strong-regular with the degree bound $\Delta$.

We will continue to refine $G$ as long as it is not strong
$\GF$-regular. Assume $G=\{g_1,...,g_m\}$ at some iteration is not
strong-$\GF$-regular. By definition, there is some $r \ge 0$ and
coefficients $\{\alpha_{i,I}\}_{i \in [m], I \subseteq [r], |I| \le
\Delta(g_i)}$ s.t. the linear combination:
$$
a(X,Y_1,...,Y_r) = \sum_{i \in [m], I \subseteq [r], |I| \le
\Delta(g_i)} \alpha_{i,I} g_i(X+Y_I)
$$
can be expressed as a function of $l \le \GF(m)$ polynomials
$h_1,...,h_l \in Der(G')$, where $G'=\{i \in [m]: \exists I\
\alpha_{i,I} \ne 0\}$ is the set of all $g_i$'s participating in the
linear combination.

Let $g_{i_0}$ be a polynomial of maximal degree $k$ in $G'$ and let
$I_0$ be a maximal $I$ in respect to inclusion s.t.
$\alpha_{i_0,I_0} \ne 0$. Notice that we must have that $|I_0| \le
\Delta(g_{i_0})$. We have:
$$
\sum_{i \in [m], I \subseteq [r], |I| \le \Delta(g_i)} \alpha_{i,I}
g_i(X+Y_I) = H(h_1(X+Y_{J_1}),...,h_l(X+Y_{J_l}))
$$
for some function $H:\F^l \to \F$.

Notice first that $\deg(h_i) \le k-1$ for all $i \in [l]$.
Substitute in the expression $Y_i = 0$ for all $i \notin I_0$. We
get that $g_{i_0}(X + Y_{I_0})$ can be
expressed as a function of $\{g_{i_0}(X+Y_J): J \subsetneq I_0\}$,
$\{g_j(X+Y_J): j \neq i,\ J \subseteq I_0,\ |J| \le \Delta(g_j)\}$
and $\{h_j(X+Y_J): J \subseteq I_0,\ |J| \le \deg(h_j)\}$. Thus, if
we add the polynomials $h_1,...,h_l$ to $G$ (and set $\Delta(h_i) =
\deg(h_i)$), we can reduce $\Delta(g_j)$ to $|I_0|-1$. If we reduced
it to zero, we can remove $g_j$ entirely from $G$. The resulting $G$
will be our set for the next iteration.

In order to prove that the refinement process ends after a finite
number of iterations (depending on the initial size of $F$ and its
maximal degree), notice that at each iteration, the sum of
$\Delta(g_i)$ for all $g_i \in G$ with some degree $d'$ reduces by
at least $1$, where the new
polynomials added are all of degree strictly smaller than $d'$, and
their number is bounded (as a function of $\GF$ and the size of $G$
at the beginning of the iteration). So the total number of
iterations is some Ackerman-like function of the initial number of
polynomials, their maximal degree and the growth function $\GF$.
\end{proof}

\section{Almost independence by strong regularity}\label{sec:almost_ind_by_reg}
In the following we prove that strong regularity
induces almost independence/total dependence structure over general
sets of variables. The following lemmas are the main technical
building blocks in the proof of Theorem~\ref{thm:main}.

%

We start by proving a lemma correlating applications of $g_i$ on
sums below the degree bound $\Delta$ to all sums over a set of
variables.

\begin{lemma}\label{lemma:belowdeltainducesall}
Let $G=\{g_1,...,g_m\}$ be a strong-regular set with degree bound
$\Delta$. Let $x,x' \in \F^n$ be two points s.t. $g_i(x)=g_i(x')$
for all $i \in [m]$. Let $y'_1,...,y'_k \in \F^n$ be values for some
$k \ge 1$, and let $Y_1,...,Y_k \in \F^n$ be $k$ random variables.
Then the following two events are equivalent:
\begin{enumerate}
  \item $A = [g_i(x+Y_I)=g_i(x'+y'_I)$ for all $i \in [m]$ and $I \subseteq [k]]$
  \item $B = [g_i(x+Y_I)=g_i(x'+y'_I)$ for all $i \in [m]$ and $I \subseteq [k]$ s.t. $1 \le |I| \le \Delta(g_i)]$
\end{enumerate}
\end{lemma}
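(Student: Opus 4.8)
The plan is to observe that the implication $A \Rightarrow B$ is trivial — the equalities defining $B$ form a subset of those defining $A$ — and to concentrate all the work on $B \Rightarrow A$. Concretely, I would fix an arbitrary assignment $Y_t = z_t$ ($t \in [k]$) for which event $B$ holds, and prove that then $g_i(x + z_I) = g_i(x' + y'_I)$ for \emph{every} $i \in [m]$ and \emph{every} $I \subseteq [k]$ — which is exactly event $A$ — so that $A$ and $B$ hold for precisely the same assignments.

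I would split this by the size of $I$. When $I = \emptyset$ the claim reads $g_i(x) = g_i(x')$, which is the hypothesis of the lemma. When $1 \le |I| \le \Delta(g_i)$ the claim is literally one of the equalities asserted by event $B$. The only remaining case is $|I| = r > \Delta(g_i)$, and here I would apply clause (2) of strong regularity (Definition~\ref{dfn:strongregularity}) to the $r$ variables $\{Y_t : t \in I\}$ playing the role of $Y_1,\dots,Y_r$: it produces a function $F_{i,r}$, depending only on $i$ and $r$, together with a polynomial identity expressing $g_i(X + Y_I)$ as $F_{i,r}$ evaluated at the collection $\{\, g_j(X + Y_J) : j \in [m],\ J \subseteq I,\ |J| \le \Delta(g_j)\,\}$. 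Since this is an identity in the variables $X, Y_1, \dots, Y_r$, I may substitute into it the values $X = x,\ Y_t = z_t$, and separately $X = x',\ Y_t = y'_t$, to obtain
\[
g_i(x + z_I) = F_{i,r}\bigl(\, g_j(x + z_J) : j,\ J \subseteq I,\ |J| \le \Delta(g_j)\,\bigr),
\]
\[
g_i(x' + y'_I) = F_{i,r}\bigl(\, g_j(x' + y'_J) : j,\ J \subseteq I,\ |J| \le \Delta(g_j)\,\bigr),
\]
with the \emph{same} $F_{i,r}$ in both. It then remains to match the arguments of $F_{i,r}$ one by one: for each relevant pair $(j, J)$, either $J = \emptyset$, in which case $g_j(x + z_\emptyset) = g_j(x) = g_j(x') = g_j(x' + y'_\emptyset)$ by hypothesis, or $1 \le |J| \le \Delta(g_j)$, in which case $g_j(x + z_J) = g_j(x' + y'_J)$ since event $B$ holds. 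Hence the two evaluations of $F_{i,r}$ agree, giving $g_i(x + z_I) = g_i(x' + y'_I)$.

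I do not expect a genuine obstacle; the two points deserving a line of care are: (i) clause (2) is an identity between functions of $X, Y_1, \dots, Y_r$, so substituting particular values is legitimate and — crucially — uses the \emph{same} function $F_{i,r}$ for the $x$-evaluation and the $x'$-evaluation, since $F_{i,r}$ does not depend on the point at which we evaluate; and (ii) clause (2) already reduces $g_i(X + Y_{[r]})$ directly to index sets of size at most $\Delta(g_j)$, so no induction on $|I|$ is needed — every argument of $F_{i,r}$ is covered either by event $B$ or by the base hypothesis $g_i(x) = g_i(x')$. Putting $A \Rightarrow B$ together with $B \Rightarrow A$ shows that $A$ and $B$ are satisfied by exactly the same assignments of $Y_1,\dots,Y_k$, i.e. they are equivalent events.
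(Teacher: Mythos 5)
Your proof is correct and follows essentially the same route as the paper: trivial direction $A \Rightarrow B$, then for the nontrivial direction apply clause (2) of strong regularity to express $g_i(X+Y_I)$ when $|I| > \Delta(g_i)$, substitute the two evaluation points into the same identity, and match arguments using the hypothesis $g_i(x)=g_i(x')$ together with event $B$. Your write-up is slightly more explicit (treating $I=\emptyset$ separately and noting that the same $F_{i,r}$ is used for both evaluations), but there is no substantive difference.
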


\begin{proof}
It is obvious that if $A$ holds then also $B$ holds. Assume that $B$
holds, i.e. that
$$
g_i(x+Y_I)=g_i(x'+y'_i)
$$
for all $i \in [m]$ and $I \subseteq [k]$ s.t. $|I| \le
\Delta(g_i)$.
Take some $I$ s.t. $I > \Delta(g_i)$. We need to show that also
$g_i(x+Y_{I})=g_i(x'+y'_{I})$. Since $|I| > \Delta(g_i)$ we know by
the strong regularity of $G$ that there is a function $F_{i,I}$ s.t.
$$
g_i(X+Y_I) = F_{i,I}\(g_j(X+Y_J):\ j \in [m],\ J \subseteq I,\ |J|
\le \Delta(g_j)\)
$$
By first substituting $X=x$ to compute $g(x+Y_{I})$, and then
substituting $X=x'$ and $Y_j = y'_j$ to compute
$g(x'+y'_{I})$,
and using that both $g_j(x)=g_j(x')$ for all $j \in [m]$ and the
assumption that $B$ holds, we get that also
$g_i(x+Y_{I})=g_i(x'+y'_{I})$.
\end{proof}

We now prove a lemma showing that points which are sum of at most
$\Delta(g_i)$ points for all $g_i$ are simultaneously almost
disjoint, provided that $\GF$ is large enough. Remember that we are
in the process of proving Theorem~\ref{thm:main} for degree $d$ by
induction. Thus, we assume it to hold for all degrees $d'<d$, and in
particular to all linear combinations of
$g_1,...,g_m$.

\begin{lemma}\label{lemma:belowdeltaindependent}
Let $\gamma = \gamma(m)$ be an error term. Let $Y_1,...,Y_k \in
\F^n$ be random variables for some $k \ge 1$. Assume $\GF$ is large
enough (as a function of $\gamma$ and $k$).
Assume $g_1,...,g_m$ are strong $\GF$-regular with degree bound
$\Delta$. For any non-empty $I \subseteq [k]$ let
$x_I \in \F^n$ be some point, and $a^{(I)}=(a^{(I)}_1,...,a^{(I)}_k)
\in \F^k$ s.t.
\begin{itemize}
  \item $a^{(I)}_i \neq 0$ for all $i \in I$
  \item $a^{(I)}_i = 0$ for all $i \notin I$
\end{itemize}
Then the joint distribution of
$$
\(g_i(x_I + \sum_{i \in I}a^{(I)}_i Y_i):\ i \in [m],\ I \subseteq
[k],\ 1 \le |I| \le \Delta(g_i)\)
$$
is $\gamma$-close to the uniform distribution on $\F^{\sum_{i=1}^m
\sum_{j=1}^{\Delta(g_i)} {k \choose j}}$.
\end{lemma}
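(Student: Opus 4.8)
The plan is to bound the statistical distance from uniform by a Fourier/bias argument, exactly as one proves that a single polynomial with small bias is close to equidistributed, but applied to the collection of all the evaluated polynomials simultaneously. Write $N = \sum_{i=1}^m \sum_{j=1}^{\Delta(g_i)} \binom{k}{j}$ for the number of random variables in the tuple, and index them as $W_{i,I} = g_i(x_I + \sum_{i' \in I} a^{(I)}_{i'} Y_{i'})$ for $i \in [m]$, $I \subseteq [k]$, $1 \le |I| \le \Delta(g_i)$. A standard fact (the "XOR lemma" for distributions over $\F^N$) says that a distribution on $\F^N$ is $\gamma$-close to uniform provided every nonzero Fourier character has bias at most $\gamma |\F|^{-N}$; so it suffices to show that for every nonzero coefficient vector $(\beta_{i,I})$ the polynomial
$$
b(Y_1,\dots,Y_k) = \sum_{i,I} \beta_{i,I}\, g_i\Big(x_I + \sum_{i' \in I} a^{(I)}_{i'} Y_{i'}\Big)
$$
has $|bias(b)| \le \gamma'$ for a suitably small $\gamma' = \gamma'(\gamma, k, m)$, which we will guarantee by choosing $\GF$ large enough.

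First I would reduce $b$ to an honest linear combination of shifts of the $g_i$'s. The linear substitution $Y_{i'} \mapsto a^{(I)}_{i'} Y_{i'}$ cannot be applied uniformly across all $I$ since different $I$'s use different coefficient vectors $a^{(I)}$; instead I would absorb the scalings into the $g_i$'s — note $g_i(x_I + \sum_{i' \in I} a^{(I)}_{i'} Y_{i'})$ is a shift (by the fixed point $x_I$, with scaled arguments) of a polynomial of the same degree as $g_i$ obtained from $g_i$ by the invertible linear map on the relevant $Y$-block. The degrees are unchanged, so each summand is still a shift of a degree-$\deg(g_i)$ polynomial, and $b$ becomes a linear combination over the "extended" variable set $(X, Y_1, \dots, Y_k)$ exactly of the type ruled out by clause (3) of strong $\GF$-regularity (with $X$ set to a constant — one checks clause (3) with $X$ substituted to a value is still available because constants are themselves expressible). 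So if $b$ had large bias, then by the induction hypothesis of Theorem~\ref{thm:main} (which we are allowed to assume for all degrees $d' < d$, and $b$ has degree $\le d-1$ as the $g_i$ have degree $\le d-1$ in the inductive setting, wait — more carefully, $b$ has degree $\le \max_i \deg(g_i) \le d-1$), $b$ could be computed by at most $c(\deg(b), \gamma')$ derivatives of $b$, i.e. elements of $Der(G')$ — and a derivative of a shift is a shift of a derivative. That would exhibit $b$ as a function of at most $\GF(m)$ polynomials drawn from $Der(G')$ evaluated at shifts $X + Y_J$, which is precisely the situation forbidden by clause (3), provided we chose $\GF(m) \ge c(d-1, \gamma')$. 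This contradiction forces $|bias(b)| < \gamma'$, and then Lemma~\ref{lemma:belowdeltainducesall}-style bookkeeping plus the XOR lemma closes the argument once $\gamma'$ is small enough that $\gamma' |\F|^{N} \le \gamma$, where $N$ itself is bounded in terms of $m$ and $k$.

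The main obstacle I anticipate is the reduction of clause (3) to the present setting: clause (3) of Definition~\ref{dfn:strongregularity} is stated for a linear combination of the $g_i(X + Y_I)$ with the \emph{unscaled} shifts $Y_I = \sum_{i \in I} Y_i$ and with a free variable $X$, whereas here we have scaled shifts $\sum_{i' \in I} a^{(I)}_{i'} Y_{i'}$ with distinct scalings per $I$ and a fixed point $x_I$ per $I$. I would handle the scalings by a change of variables that is \emph{not} global: since the forbidden-computation conclusion of clause (3) is closed under invertible linear substitutions of the $Y_i$'s and under substituting $X$ to a constant (a low-rank representation pulls back to a low-rank representation of the same or smaller size, because composition with an affine map of the ambient space does not increase the number of constituent lower-degree polynomials), it is enough to find \emph{one} invertible change of variables after which the collection of shifts involved are all of the form $X + Y_J$; the different fixed points $x_I$ can be merged into $X$ by introducing enough auxiliary $Y$ coordinates. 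Carrying this bookkeeping out carefully — matching exactly which derivatives of which $g_i$ show up, verifying the degree bound $\le d-1$ so the induction hypothesis applies, and tracking the quantitative dependence of $\GF$ on $\gamma$, $k$ and $m$ through the XOR lemma — is the technical core; the rest is the routine Fourier-to-distance estimate.
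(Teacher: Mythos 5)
Your high-level shape is right: pass to a nonzero Fourier character of the joint distribution, get a biased linear combination $b$ of the evaluated $g_i$'s, apply the induction hypothesis of Theorem~\ref{thm:main} to write $b$ as a function of a few of its own derivatives (hence of polynomials in $Der(G')$ on shifts), and contradict clause (3) of strong regularity by choosing $\GF$ large enough. This is also the paper's skeleton. But there is a genuine gap in the middle, and it is exactly the step you flagged as the ``main obstacle'': you cannot in general put $b(Y_1,\dots,Y_k) = \sum_{i,I}\beta_{i,I}\,g_i(x_I + \sum_{i'\in I} a^{(I)}_{i'}Y_{i'})$ into the shape $\sum_{i,I}\alpha_{i,I}\,g_i(X + Y_I)$ that clause (3) speaks about. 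The scaling vectors $a^{(I)}$ genuinely vary with $I$: if, say, $a^{\{1\}}_1 \ne a^{\{1,2\}}_1$, then $Y_1$ appears with different coefficients in the two arguments and no single invertible change of variables (on the $Y_i$'s, or on any enlarged set of $Y$'s) can make both arguments take the unit-coefficient form $X + Y_J$ simultaneously. Likewise, the constants $x_I$ are arbitrary and inconsistent across $I$ (clause (3) uses one free $X$ shared by all terms, and $X$ is a genuine variable, not a constant); merging them is obstructed because the $Y_J$'s for different $J$'s overlap, so shifting each $Y_i$ by a constant cannot independently adjust every $x_I$. Finally, ``clause (3) with $X$ substituted to a constant is still available'' is not something the definition gives you: clause (3) forbids a representation of $a(X,Y_1,\dots,Y_r)$ with $X$ free, and specializing $X$ to a point can only make such a representation easier to find, so the conclusion does not transfer.

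The paper gets around all three obstructions at once with a single device you omit: it fixes an inclusion-maximal $I_0$ (with $|I_0|=r$, say $I_0=[r]$) carrying a nonzero coefficient, and differentiates the biased combination $h$ once in each of $Y_1,\dots,Y_r$ (directions $Z_1,\dots,Z_r$). By maximality of $I_0$, every term $g_i(x_I + Y'_I)$ with $I \ne I_0$ is missing some derived variable and so is annihilated, while Lemma~\ref{lemma:derivativebias} guarantees the derived polynomial $h'$ still has bias at least $\rho^{2^k}$. After the derivation only the $I=I_0$ terms survive, so all the constants collapse to the single $x_{I_0}$, all the scalings can be absorbed by rescaling $Y_1,\dots,Y_r$ individually (harmless now, since only one $I$ remains), and the surviving expression depends only on the sum $Y_{[r]}$; setting $W=x_{I_0}+Y_{[r]}$ turns $W$ into an honest free variable. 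That gives a polynomial $h'(W,Z_1,\dots,Z_r) = \sum_i \alpha_{i,[r]}\sum_{w\subseteq[r]}(-1)^{|w|}g_i(W+Z_w)$ which \emph{literally} has the shape of clause (3), with $W$ playing the role of $X$, so the contradiction goes through. Without the differentiation step your argument cannot reach a point where clause (3) applies, so the proposal as written does not close.
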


Before proving Lemma~\ref{lemma:belowdeltaindependent}, we give an
immediate corollary of it and
Lemma~\ref{lemma:belowdeltainducesall}:

\begin{cor}\label{cor:jointprob}
Let $x,x' \in \F^n$ be two points s.t. $g_i(x)=g_i(x')$ for all $i
\in [m]$. Let $y'_1,...,y'_k \in \F^n$ be values for some $k \ge 1$,
and let $Y_1,...,Y_k \in \F^n$ be $k$ random variables. Then
$$
\P \[g_i(x+Y_I)=g_i(x'+y'_I)\ \forall\ i \in [m],\ I \subseteq [k]\]
= |\F|^{-\sum_{i=1}^m \sum_{j=1}^{\Delta(g_i)} {k \choose j}} (1 \pm
\gamma)
$$
\end{cor}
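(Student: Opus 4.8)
The plan is to derive Corollary~\ref{cor:jointprob} by combining the two preceding lemmas in the obvious way. First I would observe that, under the hypotheses of the corollary, Lemma~\ref{lemma:belowdeltainducesall} applies verbatim: since $g_i(x)=g_i(x')$ for all $i\in[m]$, the ``full'' event $A=[g_i(x+Y_I)=g_i(x'+y'_I)\text{ for all }i\in[m],\ I\subseteq[k]]$ and the ``truncated'' event $B=[g_i(x+Y_I)=g_i(x'+y'_I)\text{ for all }i\in[m],\ I\subseteq[k]\text{ with }1\le|I|\le\Delta(g_i)]$ are equivalent. (Note $I=\emptyset$ contributes nothing new, as $g_i(x)=g_i(x')$ by assumption.) Hence it suffices to compute $\P[B]$.

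Next I would compute $\P[B]$ using Lemma~\ref{lemma:belowdeltaindependent}. The event $B$ asks that a specific collection of $\sum_{i=1}^m\sum_{j=1}^{\Delta(g_i)}\binom{k}{j}$ field-valued random variables — namely $g_i(x+Y_I)$ for $i\in[m]$ and $1\le|I|\le\Delta(g_i)$ — each take a prescribed value (the value $g_i(x'+y'_I)$, which is a fixed element of $\F$ once $x'$ and the $y'_i$ are fixed). To invoke Lemma~\ref{lemma:belowdeltaindependent} I set, for each non-empty $I\subseteq[k]$, the point $x_I:=x$ and the vector $a^{(I)}$ to be the indicator vector of $I$ (so $a^{(I)}_i=1$ for $i\in I$, $a^{(I)}_i=0$ otherwise), which satisfies the two bullet conditions; then $x_I+\sum_{i\in I}a^{(I)}_iY_i = x+Y_I$, so the lemma tells us the joint distribution of exactly this collection is $\gamma$-close to uniform on $\F^{\sum_i\sum_{j\le\Delta(g_i)}\binom{k}{j}}$, once $\GF$ is large enough as a function of $\gamma$ and $k$.

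Finally I would conclude: since the joint distribution is $\gamma$-close to uniform, the probability that it hits any single prescribed point is $|\F|^{-\sum_{i=1}^m\sum_{j=1}^{\Delta(g_i)}\binom{k}{j}}(1\pm\gamma)$ — strictly speaking, closeness in statistical distance gives an additive $\pm\gamma$ deviation, so one should either interpret $\gamma$-closeness here as the pointwise multiplicative notion used throughout (the paper writes $u=v(1\pm\epsilon)$ for a convention) or absorb the distinction by adjusting $\gamma$ and using that the target probability $|\F|^{-N}$ is a fixed positive constant depending only on $m,k$. Chaining this with the equivalence $\P[A]=\P[B]$ from the first step yields the stated formula.

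The main obstacle is essentially bookkeeping rather than mathematical depth: one must make sure that the indexing in Lemma~\ref{lemma:belowdeltaindependent} (which ranges over $I$ with $1\le|I|\le\Delta(g_i)$, with separate base points $x_I$ and nonzero coefficient vectors $a^{(I)}$ supported on $I$) specializes correctly to the ``translation by $Y_I$'' setup of the corollary, and that the exponent $\sum_{i=1}^m\sum_{j=1}^{\Delta(g_i)}\binom{k}{j}$ genuinely counts the pairs $(i,I)$ with $1\le|I|\le\Delta(g_i)$. A secondary subtlety is reconciling the additive meaning of ``$\gamma$-close to uniform'' with the multiplicative ``$(1\pm\gamma)$'' in the conclusion; I would handle this by noting the target mass is a constant and rescaling the error term, which is harmless since $\GF$ (hence the smallness of $\gamma$) is a free parameter.
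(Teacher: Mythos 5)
Your proof is correct and matches the paper's (implicit) argument: the paper states the corollary as an immediate consequence of Lemma~\ref{lemma:belowdeltaindependent} and Lemma~\ref{lemma:belowdeltainducesall}, exactly the combination you carry out. Your remark about reconciling statistical (additive) closeness with the multiplicative $(1\pm\gamma)$ convention is a legitimate bookkeeping point that the paper glosses over, and you resolve it correctly by noting the target mass is bounded below by a quantity depending only on $m$, $k$, $d$.
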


We need the following simple lemma for the proof of
Lemma~\ref{lemma:belowdeltaindependent}. It states that a random
derivative of a biased polynomial is also biased.
\begin{lemma}\label{lemma:derivativebias}
Let $h(Y_1,...,Y_k)$ be a polynomial with bias $\delta$. Let $h'$ be
the derivation of $h$ in variables $Y_1,...,Y_r$ by directions
$Z_1,...,Z_r$, $(r \le k)$ i.e.
$$
h'(Y_1,...,Y_k,Z_1,...,Z_r) = \sum_{w \in \{0,1\}^r} (-1)^{|w|}
h(Y_1 + w_1 Z_1,...,Y_r + w_r Z_r, Y_{r+1},...,Y_k)
$$
where $|w|$ denotes the hamming weight of $w$. Then $bias(h') \ge
\delta^{2^r}$.
\end{lemma}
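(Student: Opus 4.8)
The plan is to prove Lemma~\ref{lemma:derivativebias} by induction on $r$, the number of directions of differentiation. The base case $r=0$ is trivial since then $h'=h$ and $\mathrm{bias}(h')=\delta$. For the inductive step, it suffices (by composing one derivative at a time) to handle the case $r=1$: given a polynomial $h(Y_1,\dots,Y_k)$ with $\mathrm{bias}(h)\ge\delta$, show that its single derivative $h'(Y_1,\dots,Y_k,Z_1) = h(Y_1+Z_1,Y_2,\dots,Y_k) - h(Y_1,Y_2,\dots,Y_k)$ satisfies $\mathrm{bias}(h')\ge\delta^2$. Iterating this $r$ times then yields $\mathrm{bias}(h')\ge\delta^{2^r}$, since $(\delta^{2})^{2}=\delta^{4}$, and so on, the exponent doubling each time; more carefully, if after $j$ derivatives the bias is at least $\delta^{2^j}$, then after one more it is at least $(\delta^{2^j})^2 = \delta^{2^{j+1}}$.

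The heart of the matter is the $r=1$ estimate, which is the standard Cauchy--Schwarz / Gowers-type inequality. First I would write, abbreviating $Y=(Y_1,\dots,Y_k)$ and $Z=Z_1$,
$$
\mathrm{bias}(h') = \E_{Y,Z}\[\omega^{h(Y_1+Z,Y_2,\dots,Y_k)-h(Y)}\].
$$
By a change of variables, replacing $Y_1$ by $Y_1$ and $Y_1+Z$ by an independent uniform variable $Y_1'$ over $\F^n$ (valid since $Z$ is uniform and independent), this equals
$$
\E_{Y_1,Y_1',Y_2,\dots,Y_k}\[\omega^{h(Y_1',Y_2,\dots,Y_k)} \cdot \overline{\omega^{h(Y_1,Y_2,\dots,Y_k)}}\].
$$
Fixing $Y_2,\dots,Y_k$ and taking expectation over $Y_1,Y_1'$, the inner expectation factors as $\bigl|\E_{Y_1}[\omega^{h(Y_1,Y_2,\dots,Y_k)}]\bigr|^2 \ge 0$. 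Hence $\mathrm{bias}(h')$ is real and nonnegative, and
$$
\mathrm{bias}(h') = \E_{Y_2,\dots,Y_k}\[\,\bigl|\E_{Y_1}[\omega^{h(Y_1,\dots,Y_k)}]\bigr|^2\,\] \ge \bigl|\E_{Y_1,\dots,Y_k}[\omega^{h(Y_1,\dots,Y_k)}]\bigr|^2 = \mathrm{bias}(h)^2 \ge \delta^2,
$$
where the middle inequality is Jensen/Cauchy--Schwarz applied to the outer expectation ($\E[|W|^2]\ge|\E[W]|^2$). This completes the $r=1$ case.

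The main (minor) subtlety to watch is bookkeeping with which variables are being differentiated: when we derive in $Y_1,\dots,Y_r$ by $Z_1,\dots,Z_r$, at each stage we should differentiate in a fresh variable $Y_j$ so that the one-variable argument applies verbatim with the remaining variables held as parameters; since the $Y_j$ and $Z_j$ are all independent, the order of differentiation does not matter and no complication arises. A secondary point is that $\mathrm{bias}(h)$ could a priori be a complex number, but the argument shows that after even one derivative the bias is a genuine nonnegative real, so the inequality $\mathrm{bias}(h')\ge|\mathrm{bias}(h)|^2$ is the honest statement; since the hypothesis gives $|\mathrm{bias}(h)|\ge\delta$ (or $\mathrm{bias}(h)\ge\delta>0$ as written), we get $\mathrm{bias}(h')\ge\delta^2$ as claimed. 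I do not anticipate any real obstacle here; this is a routine application of the Gowers--Cauchy--Schwarz inequality, and the only care needed is in the inductive accounting of the doubling exponent $2^r$.
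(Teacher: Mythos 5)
Your proposal is correct and takes essentially the same route as the paper: reduce to a single derivative by grouping the remaining variables, apply Cauchy--Schwarz (Jensen) to get the squaring, and iterate. You are slightly more careful than the paper's own write-up in noting that the inner expectation is $\bigl|\E_{Y_1}[\omega^{h(Y_1,\dots)}]\bigr|^2$ (a nonnegative real) rather than $\bigl(\E_{Y_1}[\omega^{h(Y_1,\dots)}]\bigr)^2$ as literally written there.
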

\begin{proof}
We apply Cauchy-Schwartz. It's enough to prove for $k=2$ and $r=1$
because we can group variables.
\begin{align*}
bias(h') = & \E_{Y_1,Y_2,Z_1 \in
\F^n}[\omega^{h(Y_1,Y_2)-h(Y_1+Z_1,Y_2)}] =
\E_{Y_2 \in \F^n} [\(\E_{Y_1 \in \F^n} [\omega^{h(Y_1,Y_2)}]\)^2] \ge \\
&\(\E_{Y_1,Y_2 \in \F^n} [\omega^{h(Y_1,Y_2)}]\)^2 = \delta^2
\end{align*}
\end{proof}

\begin{proof}(of Lemma~\ref{lemma:belowdeltaindependent})
We start by using the well known fact, that if a distribution over
$\F^r$ is not uniform, it must have some biased functional. If the
distribution we study is
$\gamma$-far from uniform, then there must be a linear functional on
$\{g_i(x_I+\sum_{i \in I}a^{(I)}_i Y_i):\ i \in [m],\ I \subset
[k],\ |I| \le \Delta(g_i)\}$ with some non-negligible bias depending
on $\gamma$. We will prove that if we assume that, we reach a
contradiction.

Denote by
$Y'_I = \sum_{i \in I}a^{(I)}_i Y_i$, and notice it depends on
exactly the same set of variables from $Y_1,...,Y_k$ as
$Y_I$. By our assumption, there exist coefficients
$\{\alpha_{i,I}\}$, not all zero, s.t. the polynomial
$$
h(Y_1,...,Y_k) = \sum_{i \in [m], I \subseteq [k], |I| \le
\Delta(g_i)} \alpha_{i,I} g_i(x_I+Y'_I)
$$
has bias at least $\rho$, where $\rho$ is a function of $\gamma$,
$k$ and $m$ only (and not of $n$).

Fix $I_0$ maximal with regards to inclusion s.t. not all
$\alpha_{i,I_0}$ are zero. Since we just care about the bias of $h$
under random $Y_1,...,Y_k$, we can multiply each $Y_i$ by some
non-zero coefficient. We thus assume w.l.o.g that $a^{(I_0)}_i = 1$
for all $i \in I_0$.
Let $|I_0|=r$. We assume w.l.o.g that $I_0 = \{1,2,...,r\}$. Notice
that $Y'_{[r]}=Y_{[r]}$. We also shorthand $x=x_{[r]}$.

Let $g_{i_0}$ be a polynomial with maximal degree
$d'' \le d' < d$ s.t. $\alpha_{i_0,I_0} \ne 0$.

We derive now once each of the variables in $Y_1,...,Y_r$. Let
$\{Z_i\}_{i=1..r}$ be new variables in $\F^n$,

and consider:
$$
h'(Y_1,...,Y_k,Z_1,...,Z_r) = \sum_{w \in \{0,1\}^r} (-1)^{|w|}
h(Y_1 + w_1 Z_1,...,Y_r + w_r Z_r,Y_{r+1},...,Y_k)
$$
First, by Lemma~\ref{lemma:derivativebias}, $h'$ has bias at least
$\rho'=\rho^{2^k}$.

Now, consider what happens to a term
$g_i(x+Y'_I)$ in $h$ after the derivation. If $I \neq [r]$, by the
maximality of $I_0$ there must exist $i' \in [r]$ s.t. $i' \notin
I$. Thus, deriving $Y_{i'}$ zeroes out
$g_i(x+Y'_I)$.

So, the only terms remaining in $h'$ come from terms in $h$ of the
form $g_i(x+Y_{[r]})$. Thus, $h'$ does not depend on $Y_i$ for $i
\notin [r]$, and also all the $g_i$'s remaining must have
$\Delta(g_i) \ge r$ (because $g_i(x+Y_{[r]})$ appeared in $g$ with
non-zero coefficient). Thus we can write:
$$
h'=h'(Y_1,...,Y_r,Z_1,...,Z_r) = \sum_{i \in [m]} \alpha_{i,[r]}
\sum_{w \subseteq [r]} (-1)^{|w|} g_i(x+Y_{[r]} + Z_w)
$$

We now make an important observation. Notice that $h'$ depends only
on the sum $Y_{[r]}$, and not on the individual $Y_1,...,Y_r$. So we
can substitute $W = x + Y_{[r]}$ and get:
$$
h'=h'(W,Z_1,...,Z_r) = \sum_{i \in [m]} \alpha_{i,[r]} \sum_{w
\subseteq [r]} (-1)^{|w|} g_i(W+Z_w)
$$

We have assumed that $G$ is
strong $\GF$-regular. We will show now that if we choose $\GF$ large
enough, we have already reached a contradiction. Notice the
polynomials $g_i(W+Z_w)$ are exactly those which appear in the
regularity requirements (
where $X$ is replaced here by $W$, and $Y_1,Y_2,...$ by
$Z_1,Z_2,...$). Let $G'$ denote the set of $g_i$'s s.t. $g_i$ appear
in $h'$ with non-zero coefficient.

We assume by induction that Theorem~\ref{thm:main} holds for $d''<d$
and for all $n$. Since all polynomials $g_i \in G$ have degree at
most $d-1$, then also $\deg(h') \le d-1$, and so we can apply
Theorem~\ref{thm:main} on $h'$. So, since $h'$ has bias $\rho'$,
there must exist polynomials $q_1,...,q_t \in Der(h')$ s.t.
$$
h'(W,Z_1,...,Z_r) = Q(q_1(W,Z_1,...,Z_r),...,q_t(W,Z_1,...,Z_r))
$$
for some function $Q:\F^t \to \F$, s.t. $t=t(\rho',d'')$. Moreover,
since every polynomial $q_i$ is of the form
$h'(W+a_0,Z_1+a_1,...,Z_r+a_r) - h'(W,Z_1,...,Z_r)$ for some
constants $a_0,...,a_r \in \F^n$, and $h'$ is the sum of $g_i(W +
Z_w)$, we can decompose each $q_i$ to a sum of at most $2^r$
polynomials of the form $g_i(W + Z_w + a) - g_i(W + Z_w) \in
Der(G')$ for $w \subseteq \{0,1\}^r$. Let $q'_1,...,q'_{t'} \in
Der(G')$ denote these decomposed polynomials. We thus have that:
$$
h'(W,Z_1,...,Z_r) = Q'(q'_1(W + Z_{I'_1}),...,q'_{t'}(W +
Z_{I'_{t'}}))
$$
for some function $Q':\F^{t'} \to \F$, $t' = 2^r t$ and
$I'_1,...,I'_t \subseteq [r]$. We got that we can compute
$$
h'(W,Z_1,...,Z_r) = \sum_{i \in [m]} \alpha_{i,[r]} \sum_{w
\subseteq [r]} (-1)^{|w|} g_i(W+Z_w)
$$
as a function of $t'$ polynomials of degree strictly smaller than
$d''$. If we have $\GF(m) > t'$ this is a contradiction
to the strong $\GF$-regularity of $g_1,...,g_m$.

Summarizing, there can be no linear combination of $\{g_i(x + Y_I):
I \in S, 1 \le |I| \le \Delta(g_i)\}$ which has bias more than
$\rho$, and so the distribution is $\gamma$-close to uniform.
\end{proof}

\section{From approximation to calculation: proof of Theorem~\ref{thm:main}}\label{sec:approxtocalc}
In this section we prove Theorem~\ref{thm:main}. The main technical
tool that we will use are Lemmas~\ref{lemma:belowdeltainducesall}
and~\ref{lemma:belowdeltaindependent}. Let $p(X)$ stand for a degree
$d$ polynomial with bias $\delta$. The proof of the theorem is
immediate given the following two lemmas. The first lemma (Lemma
\ref{lemma:approximation}) asserts that a biased degree $d$
polynomial can be approximated by constant many degree $d-1$
polynomials. This lemma was useful also in the proof of Green and
Tao and its proof appears in~\cite{bv}. The second lemma (Lemma
\ref{lemma:computation}) asserts that approximation by few degree
$d-1$ polynomials imply computation by few degree $d-1$ polynomials.
In the following we present the two lemmas.


\begin{lemma}[Bias imply approximation by few lower degree polynomials]\label{lemma:approximation}
Let $p(X)$ be a polynomial of degree $d$ with bias $\delta$. For any
$\epsilon>0$ there exist polynomials $f_1(X),...,f_s(X)$ of degree
at most $d-1$ and a function $F:\F^s \to \F$ s.t.
$$
\P_{X \in \F^n}[F(f_1(X),...,f_s(X)) \ne p(X)] < \epsilon
$$
The number $s$ of the polynomials depends only on $\delta$ and
$\epsilon$. Moreover, $f_1,...,f_s \in Der(p)$.
\end{lemma}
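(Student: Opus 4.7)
The plan is the Bogdanov--Viola self-correction trick applied to derivatives. I would pick $s$ independent uniformly random shifts $a_1,\ldots,a_s\in\F^n$ and set $f_i(X)=p(X+a_i)-p(X)$, which automatically lie in $Der(p)$ and have degree at most $d-1$. The decoder $F:\F^s\to\F$ will be a hypothesis test: precompute (independently of $X$) the value distribution $\pi(v)=\Pr_X[p(X)=v]$ of $p$, and on input $(c_1,\ldots,c_s)$, for each candidate $g\in\F$ form the empirical distribution $\hat\pi_g$ of the multiset $\{c_i+g\}_{i=1}^s$ and return the $g$ minimizing $\|\hat\pi_g-\pi\|_1$. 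Note that $\pi$ is a finite table depending only on $p$, so $F$ is a legitimate function $\F^s\to\F$ independent of $n$.

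For the analysis, fix any $x\in\F^n$. Over the random $a_i$, each $x+a_i$ is uniform on $\F^n$ and the $a_i$ are independent, so $p(x+a_1),\ldots,p(x+a_s)$ are i.i.d.\ samples from $\pi$. For the correct guess $g=p(x)$, $c_i+g=p(x+a_i)$, and the empirical distribution concentrates at $\pi$ by Chernoff--Hoeffding. For any wrong guess $g$, $c_i+g=p(x+a_i)+(g-p(x))$, so $\hat\pi_g$ concentrates at the additive translate $\pi_{+(g-p(x))}$ of $\pi$. The key quantitative input is that $\pi$ is separated from each of its nonzero additive translates: for $c\neq 0$,
$$
\sum_v\bigl(\pi(v)-\pi(v-c)\bigr)\,\omega^v=(1-\omega^c)\,\E_X[\omega^{p(X)}],
$$
whose magnitude is at least $|1-\omega^c|\,\delta\ge 2\sin(\pi/|\F|)\,\delta$. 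Since the left side is bounded in modulus by $\sum_v|\pi(v)-\pi(v-c)|$, this gives $\|\pi-\pi_{+c}\|_1=\Omega(\delta/|\F|)$. Choosing the Chernoff error parameter to be a small constant times $\delta/|\F|$ and taking a union bound over the $|\F|-1$ wrong guesses, we see that the decoder is correct with probability $\ge 1-\epsilon$ over the joint randomness of $X$ and the $a_i$, provided $s$ is polynomial in $|\F|/\delta$ and logarithmic in $|\F|/\epsilon$. A standard Fubini/averaging step then fixes shifts $a_1,\ldots,a_s$ for which the decoder fails on at most an $\epsilon$ fraction of $X$; these shifts deliver the required $f_1,\ldots,f_s$ and $F$.

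The only non-routine step is the Fourier separation bound above, which is exactly where the hypothesis $\mathrm{bias}(p)\ge\delta$ enters quantitatively; everything else (sampling random derivatives, Chernoff concentration, union bound over the $|\F|$ candidate values, and converting an in-expectation error bound into the existence of explicit good shifts via Fubini) is generic.
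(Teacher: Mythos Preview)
Your argument is correct and is precisely the Bogdanov--Viola self-correction that the paper cites (it does not give its own proof, only the one-line sketch ``a random derivative approximates $p(x)$, so take a majority over enough random $a$'s''). Your $\ell^1$-closest-translate decoder is a clean instantiation of that ``majority'', and the Fourier separation bound you isolate is exactly the place where the bias hypothesis does its work; this matches the paper's intended approach.
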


The full proof can be found in \cite{bv} (Lemma 24). The proof idea
is that a random derivative in direction $a$ approximates $p(x)$ for
any $x$, and so taking a majority value over enough random values of
$a$'s (but still a constant number) allows to compute $p$ on all but
a $\epsilon$-fraction of the points.

\begin{lemma}[Approximation by few lower degree polynomials imply computation by few lower degree polynomials]\label{lemma:computation}
Let $p(X)$ be a polynomial of degree $d$, $f_1,...,f_s$ polynomials
of degree $d-1$, ($s = O(1)$) and $H:\F^s \to \F$ a function s.t.
the composition $H(f_1(X),...,f_s(X))$ $\epsilon_d$-approximates
$p$, where $\epsilon_d = 2^{-\Omega(d)}$ Then there exist $s'$
polynomials $f'_1,...,f'_{s'}$ and a function $H':\F^{s'} \to \F$
s.t.
$$
H'(f'_1(X),...,f'_{s'}(X)) \equiv p(X)
$$
Moreover, $s'=s'(d,s)$ (i.e. independent of $n$) and each $f'_i$ if
of the form $p(X+a)-p(X)$ or $f_j(X+a)$ for $a \in \F^n$.
\end{lemma}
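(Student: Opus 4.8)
The plan is to start from the given factor $\{f_1,\dots,f_s\}$ of degree-$(d-1)$ polynomials (which we may assume, by Lemma~\ref{lemma:approximation}, lies in $Der(p)$) that $\epsilon_d$-approximates $p$, and first apply the Strong-Regularity Lemma (Lemma~\ref{lemma:regularity}) to refine it into a strong $\GF$-regular factor $G=\{g_1,\dots,g_m\}$ of bounded dimension, with degree bound $\Delta$, where $\GF$ is a growth function to be chosen large enough (as a function of $d$ and $s$) so that Lemma~\ref{lemma:belowdeltaindependent} applies with a sufficiently small error term $\gamma$. Since $G\subseteq Der_C(F)\subseteq Der_C(\{p\text{-derivatives}\})$, every $g_i$ is of the allowed form. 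The refinement only increases the approximation quality, so $G$ still (essentially) $\epsilon_d$-approximates $p$. Then I would invoke the argument that in a regular factor all $|\F|^m$ regions are roughly of equal size (Lemma~\ref{lemma:regionsuniform}), which lets an averaging argument go through: since $p$ agrees with the best region-value on a $(1-\text{small})$ fraction of all of $\F^n$, and all regions have roughly the same mass, the fraction of regions on which $p$ disagrees with its majority value by more than, say, $\epsilon_d^{1/2}$ is small — call these the \emph{bad regions}; the rest are \emph{almost good}.

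The heart of the proof is to upgrade "almost good" to "good," i.e.\ to show $p$ is actually constant on an almost-good region. Here I would use the cube identity $p(x)=\sum_{\emptyset\neq I\subseteq[d+1]}(-1)^{|I|+1}p(x+y_I)$: for a fixed $x$ in an almost-good region, choose $Y_1,\dots,Y_{d+1}$ at random and consider the event that $g_i(x+Y_I)=g_i(x)$ for all $i\in[m]$ and all $I\subseteq[d+1]$ — equivalently, by Lemma~\ref{lemma:belowdeltainducesall}, only for $1\le|I|\le\Delta(g_i)$ — together with the event that none of the $x+Y_I$ land in the bad part of the region. By Corollary~\ref{cor:jointprob} the first event has probability $|\F|^{-\sum_i\sum_{j\le\Delta(g_i)}\binom{d+1}{j}}(1\pm\gamma)$, a positive constant; and by a union bound over the $O(2^d)$ sets $I$, using that the bad part of an almost-good region is an $\epsilon_d^{1/2}$-fraction and that conditioned on being in the region each $x+Y_I$ is near-uniform there (again Lemma~\ref{lemma:belowdeltaindependent}), the probability that some $x+Y_I$ is bad is $o(1)$ of that constant provided $\epsilon_d=2^{-\Omega(d)}$ is small enough. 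Hence with positive probability \emph{all} the $d+2$ points $x,x+y_I$ sit in the good part of one region, where $p$ takes the common value $c$; the cube identity then forces $p(x)=c$. So $p$ is constant on every almost-good region (this is Lemma~\ref{lemma:noalmostconstantregions}).

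It remains to handle the bad regions, of which there are few; here I would appeal to Lemma~\ref{lemma:allregionsconstant}: since $p$ is now known to be constant on all but a tiny fraction of regions, one runs essentially the same cube-constraint argument but now starting from a point $x$ in a \emph{bad} region and using that, with good probability, all the shift points $x+y_I$ avoid the (few) bad regions and land in good ones — forcing $p(x)$ to equal a fixed value determined by those. Once $p$ is constant on every region, $p$ is literally a function of $(g_1(x),\dots,g_m(x))$, i.e.\ $p=H'(g_1,\dots,g_m)$ for some $H':\F^m\to\F$, which gives the conclusion with $s'=m=m(d,s)$ and each $f'_i=g_i$ of the stated form. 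The main obstacle — and the step that required the new strong-regularity notion — is establishing the near-independence in Corollary~\ref{cor:jointprob} over a hypercube of dimension $d+1\ge|\F|$ without interpolation; that is exactly what Lemmas~\ref{lemma:belowdeltainducesall} and~\ref{lemma:belowdeltaindependent} supply, so modulo those the averaging-plus-union-bound steps are routine, and the delicate point is only to track the constants so that $\epsilon_d=2^{-\Omega(d)}$ and $\gamma$ are small enough relative to the region-probability constant $|\F|^{-\sum_i\sum_{j\le\Delta(g_i)}\binom{d+1}{j}}$.
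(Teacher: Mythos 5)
Your overall route is exactly the paper's: refine $F$ to a strong $\GF$-regular factor $G$ via Lemma~\ref{lemma:regularity}, invoke Lemma~\ref{lemma:regionsuniform} to get near-equal region sizes, run a Markov-type averaging argument to split regions into almost-good and bad, then upgrade almost-good to totally good (Lemma~\ref{lemma:noalmostconstantregions}), handle the few bad regions via Lemma~\ref{lemma:allregionsconstant}, and finally unwind the $g_i\in Der_C(F)$ into shifts to get polynomials of the required form. That all matches.

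The one real gap is in your sketch of the ``almost good implies good'' step. You justify the union bound by asserting that, \emph{conditioned} on the event $\{x+Y_I\in R\ \forall I\subseteq[d+1]\}$, each cube vertex $x+Y_{I_0}$ is near-uniform on $R$, ``again by Lemma~\ref{lemma:belowdeltaindependent}.'' But Lemma~\ref{lemma:belowdeltaindependent} (and Corollary~\ref{cor:jointprob}) only controls the joint distribution of the factor values $g_i(x+Y_I)$; it says nothing about the conditional distribution of the actual point $x+Y_{I_0}$ \emph{within} the region $R$ once you condition on the whole cube lying in $R$. The bad set $B\subseteq R$ is cut out by the values of $p$, not by the factor $G$, so the claimed conditional near-uniformity does not follow from what you cite and would need a separate argument. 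The paper gets around exactly this by a Cauchy--Schwarz / second-moment computation: it bounds $p_B=\P[x+Y_I\in R\ \forall I\ \wedge\ x+Y_{I_0}\in B]$ by passing to a sum over $x_0\in B$, squaring, and reducing to counting \emph{pairs} of hypercubes sharing a vertex, which is then controlled by part (2) of the counting lemma (Lemma~\ref{lemma:counting}) --- a lemma your plan never invokes. That second-moment step is precisely the technical device that makes the union bound rigorous (and is also where the paper departs from Green--Tao's integration argument), so your sketch is missing an ingredient, not just abbreviating one.

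A minor additional imprecision: you conclude ``each $f'_i=g_i$ of the stated form,'' but the $g_i$'s live in $Der_C(F)$, i.e.\ they are \emph{iterated} derivatives of the $f_j$'s (or of $p$, after Lemma~\ref{lemma:approximation}); to match the stated form $p(X+a)-p(X)$ or $f_j(X+a)$ one must further decompose each $g_i$ as a (bounded) linear combination of shifts, which multiplies $s'$ by a factor $2^C$. This is routine but should be stated.
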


Thus, to complete the proof of Theorem~\ref{thm:main}, it remains to
prove Lemma \ref{lemma:computation}.

We start the proof of Lemma~\ref{lemma:computation} by refining
$F=\{f_1,...,f_s\}$ to a strong-regular set. Let $\GF$ be a large
enough growth function (to be determined later). By
Lemma~\ref{lemma:regularity} there exists a set $G=\{g_1,...,g_m\}$
refining $F$, and a degree bound $\Delta$, s.t. $G$ is strong
$\GF$-regular with degree bound $\Delta$. Moreover, there exists a
$C=C(\GF,\delta,d)$ s.t. $G \subseteq Der_C(F)$. We know that $G$
also approximates $p(X)$ at least as well as $F$ does. We will prove
that it is in fact computes $F$ completely. We can then decompose
each $g_i \in Der_C(F)$ as a sum of at most $2^C$ elements in
$Der(p)$ to conclude the result.

Thus, we need to show that $G$ in fact computes $p(X)$ completely.
For $c=(c_1,...,c_m) \in \F^m$, denote by $R_c \subseteq \F^n$ the
region
$$
R_c = \{x \in \F^n: g_i(x) = c_i \}
$$
To show that $G$ computes $p(X)$ is equivalent to showing that
$p(X)$ is constant on any region $R_c$. Thus, we turn to study the
regions $R_c$.

We first show (Lemma~\ref{lemma:regionsuniform}) that all regions
$R_c$ have about the same volume, i.e. that they form an almost
uniform division of $\F^n$ to $\F^m$ regions.
Since $G$ is a strong regular refitment of $F$ that
$\epsilon_d$-approximates $p$ we know that also $G$
$\epsilon_d$-approximates $p$, i.e. there exists some $H':\F^m \to
\F$ s.t.
$$
\P_{X \in \F^n}[H'(g_1(X),...,g_m(X)) \ne p(X)] < \epsilon_d
$$

For every region $R_c$, let $\eta_c$ be the probability that $p$ is
different from $G$ on that region
($G$ is constant on the region).
$$
\eta_c = \P_{X \in R_c}[p(X) \ne G|_{R_c}]
$$
Since the average of $\eta_c$ is at most $\epsilon_d$, and all
regions are almost uniform (Lemma~\ref{lemma:regionsuniform}) there
can be at most $\sqrt{\epsilon_d} |\F|^m$ regions on which $\eta_c >
\sqrt{\epsilon_d}$. We call these the {\em bad regions}, and we call
the rest of the regions {\em almost good regions}. Next we show
(Lemma~\ref{lemma:noalmostconstantregions}) that the almost good
regions are totally good and $p$ is fixed on them. Last, we use the
fact that there are only few bad regions and $p$ is fixed on the
rest to conclude that $p$ is also fixed on the bad regions
(Lemma~\ref{lemma:allregionsconstant}). Thus, $p(X)$ is in fact
constant on all regions. To complete the proof of
Lemma~\ref{lemma:computation}, it remains to prove
Lemmas~\ref{lemma:regionsuniform},~\ref{lemma:noalmostconstantregions}
and~\ref{lemma:allregionsconstant}.

\begin{lemma}[Regions are uniform]\label{lemma:regionsuniform}
Let $\gamma = \gamma(m)>0$ be a small enough error term. If $\GF$ is
large enough than
$$
|R_c| = |\F|^{n-m} (1 \pm \gamma)
$$
for all $c \in \F^m$.
\end{lemma}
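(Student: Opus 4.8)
The plan is to show that the region $R_c$ has size $|\F|^{n-m}(1\pm\gamma)$ by the standard Fourier / bias argument, reducing the count of points in a region to an expectation of characters of linear combinations of the $g_i$, and then invoking strong regularity to argue that every such nontrivial linear combination has small bias. Concretely, I would write
\begin{equation*}
|R_c| = \sum_{x \in \F^n} \prod_{i=1}^m \mathbf{1}[g_i(x)=c_i] = \sum_{x} \prod_{i=1}^m \frac{1}{|\F|}\sum_{\lambda_i \in \F} \omega^{\lambda_i(g_i(x)-c_i)} = \frac{1}{|\F|^m} \sum_{\lambda \in \F^m} \omega^{-\sum_i \lambda_i c_i} \sum_{x} \omega^{\sum_i \lambda_i g_i(x)}.
\end{equation*}
The $\lambda = 0$ term contributes exactly $|\F|^{n-m}$, which is the main term. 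So it remains to bound the contribution of all $\lambda \neq 0$ by $\gamma |\F|^{n-m}$, i.e. to show $|\sum_x \omega^{\sum_i \lambda_i g_i(x)}| = |\F|^n \cdot bias(\sum_i \lambda_i g_i)$ is much smaller than $|\F|^n$ for every nonzero $\lambda$.

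The key step is therefore: for every nonzero $\lambda \in \F^m$, the linear combination $\sum_i \lambda_i g_i$ has bias at most (say) $\gamma |\F|^{-m}$. This is exactly where strong $\GF$-regularity enters, via condition (3) of Definition~\ref{dfn:strongregularity} specialized to $r=0$ (so that $I = \emptyset$ is the only index set, the $g_i(X+Y_I)$ are just $g_i(X)$, and $Der(G')$ polynomials have degree $\le d-2$). If $\sum_i \lambda_i g_i$ had bias $\rho$ that is not tiny, then since its degree is at most $d-1$, I can apply Theorem~\ref{thm:main} \emph{by the inductive hypothesis} (the induction is on $d$) to conclude that $\sum_i \lambda_i g_i$ is computable by $c(d-1,\rho)$ polynomials of degree $\le d-2$, each a derivative of $\sum_i \lambda_i g_i$ and hence decomposable into a bounded number of elements of $Der(G')$. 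Choosing $\GF(m)$ larger than this bound $c(d-1,\rho)\cdot(\text{small constant})$ contradicts strong regularity. Hence $\rho$ must be smaller than the threshold that makes $\GF(m)$ suffice; quantitatively, for any target $\gamma(m)$ we first fix the bias threshold, which fixes how large $\GF(m)$ must be, and Lemma~\ref{lemma:regularity} still produces a finite $m$.

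Putting it together: summing the trivial bound over the $|\F|^m - 1$ nonzero $\lambda$ gives
\begin{equation*}
\left| |R_c| - |\F|^{n-m} \right| \le \frac{1}{|\F|^m} \sum_{\lambda \neq 0} |\F|^n \cdot bias\!\left(\sum_i \lambda_i g_i\right) \le |\F|^n \cdot \max_{\lambda \neq 0} bias\!\left(\sum_i \lambda_i g_i\right) \le \gamma |\F|^{n-m},
\end{equation*}
provided $\GF$ was chosen large enough that each such bias is at most $\gamma |\F|^{-m}$. The main obstacle is the circular-looking dependency of constants: $\gamma$ determines the required bias bound, which determines how large $\GF$ must be, which (through Lemma~\ref{lemma:regularity}) determines $m$, and $\gamma = \gamma(m)$ is allowed to depend on $m$ — so one has to be careful to fix the chain of quantifiers in the right order (choose the growth function as a function of the desired $\gamma(\cdot)$ and of $d$, \emph{then} get $m$, \emph{then} read off the actual $\gamma(m)$), exactly as in the statement's phrasing ``if $\GF$ is large enough.'' The only real mathematical content beyond this bookkeeping is the use of the inductive hypothesis together with the derivative-decomposition trick to convert ``biased linear combination'' into ``small circuit over $Der(G')$,'' which is the same move already used in the proof of Lemma~\ref{lemma:belowdeltaindependent}.
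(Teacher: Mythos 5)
Your proof is correct, and at its core it is the same mechanism the paper uses, but you reach it by a different route. The paper proves this lemma by invoking Corollary~\ref{cor:jointprob} with $k=1$, $x'=x$, $y'_1=0$ — which requires first assuming $R_c$ is nonempty (to get a base point $x$), and then running a separate pigeonhole argument to rule out empty regions. You instead expand $|R_c|$ directly via characters and bound the bias of every nonzero linear combination $\sum_i\lambda_i g_i$; this handles all $c$ uniformly, so the non-emptiness case split simply disappears. The bias bound itself is where the proofs agree in spirit: inside the paper's Lemma~\ref{lemma:belowdeltaindependent}, the same Fourier-to-bias reduction appears, and it too calls the inductive instance of Theorem~\ref{thm:main} on a biased combination and derives a contradiction with condition (3) of strong regularity. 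Your version, however, skips the derivative-reduction step (Lemma~\ref{lemma:derivativebias} and the passage from $h$ to $h'$), because with $r=0$ there are no $Y$-variables to kill off — you can apply the inductive hypothesis directly to $\sum_i\lambda_i g_i(X)$, which has degree $\le d-1$, and decompose its derivatives into $Der(G')$. The quantifier ordering you spell out (fix $\gamma(\cdot)$, choose $\GF$ large as a function of $\gamma(\cdot)$ and $d$, obtain $m$ from Lemma~\ref{lemma:regularity}, then read off $\gamma(m)$) is exactly the bookkeeping the lemma's ``if $\GF$ is large enough'' phrasing is meant to absorb, and you've got it right. In short: same idea, but more direct and self-contained for this special case, at the cost of partially re-deriving what Lemma~\ref{lemma:belowdeltaindependent} packages up for reuse elsewhere.
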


\begin{proof}
Let $c \in \F^m$ and assume first that $R_c$ is not empty, i.e.
there exist some $x$ s.t. $g_i(x)=c_i$
for all $i \in [m]$. We apply Corollary~\ref{cor:jointprob} with
$k=1$, $x'=x$ and $y_1=0$ and get:
$$
\P_{Y_1}[g_i(x+Y_1)=g_i(x),\ \forall\ i \in [m]] = |\F|^{-m} (1 \pm
\gamma)
$$
Substituting $Y=x+Y_1$ proves the result for $R_c$.

To show the there can be no empty regions, assume otherwise. Thus,
there are at most $|\F|^m-1$ non-empty cells, and each has volume at
most $|\F|^{n-m}(1+\gamma)$. Thus $(|\F|^m - 1)|\F|^{n-m} (1+\gamma)
\ge |\F|^n$. If $\gamma(m) < |\F|^{-m}$ we get a contradiction.
Thus, there are no empty regions, and so all regions have volume
$|\F|^{n-m} (1 \pm \gamma)$.
\end{proof}

\begin{lemma}[Almost good regions are good]\label{lemma:noalmostconstantregions}
Let $R_c$ be a region s.t
$$
\P_{X \in R_c}[p(X) = b] > 1 - 2^{-2(d+1)}
$$
for some constant $b \in \F$. Then $p(X)=b$ for all $X \in R_c$.
\end{lemma}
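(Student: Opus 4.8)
The plan is to exploit the cube identity for degree-$d$ polynomials together with the almost-independence machinery developed in Section~\ref{sec:almost_ind_by_reg}. Fix a region $R_c$ with $\P_{X \in R_c}[p(X)=b] > 1 - 2^{-2(d+1)}$, and call $x \in R_c$ a \emph{good point} if $p(x)=b$ and a \emph{bad point} otherwise; so the bad points are a $2^{-2(d+1)}$-fraction of $R_c$. Let $x \in R_c$ be arbitrary. Since $p$ has degree $d$, for independent $Y_1,\dots,Y_{d+1} \in \F^n$ we have the cube constraint
$$
p(x) = \sum_{\emptyset \neq I \subseteq [d+1]} (-1)^{|I|+1} p(x + Y_I).
$$
I would argue that with positive probability over $Y_1,\dots,Y_{d+1}$, every point $x + Y_I$ (for $\emptyset \neq I \subseteq [d+1]$) lies in $R_c$ \emph{and} is a good point; when this happens the right-hand side collapses to $b \cdot \sum_{\emptyset\neq I}(-1)^{|I|+1} = b$, forcing $p(x)=b$. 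Since $x$ was arbitrary, this shows $p\equiv b$ on $R_c$.

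The two things to control are (i) the probability that $x+Y_I \in R_c$ simultaneously for all $I$, and (ii) the probability that some $x+Y_I$ is a bad point. For (i), apply Corollary~\ref{cor:jointprob} with $k = d+1$, $x' = x$ and $y'_1 = \cdots = y'_{d+1} = 0$: the event ``$g_i(x+Y_I) = g_i(x)$ for all $i \in [m]$ and all $I \subseteq [d+1]$'' — which is exactly ``$x + Y_I \in R_c$ for all $I$'' — has probability $|\F|^{-N}(1\pm\gamma)$ where $N = \sum_{i=1}^m \sum_{j=1}^{\Delta(g_i)} \binom{d+1}{j}$, provided $\GF$ is chosen large enough (as a function of $\gamma$, $m$, $d$). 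For (ii), I would bound each bad event separately: conditioned on $x+Y_I \in R_c$, the point $x+Y_I$ is (by Lemma~\ref{lemma:regionsuniform} and the near-uniformity of the distribution on the relevant coordinates) close to uniform in $R_c$, so the probability it is bad is at most roughly $2^{-2(d+1)}(1+\gamma)$; a union bound over the $2^{d+1}-1$ choices of $I$ gives total bad probability at most about $(2^{d+1}-1)\cdot 2^{-2(d+1)} < 2^{-(d+1)} < 1$. Since the probability in (i) is positive and the bad probability in (ii) is bounded away from $1$ (and the ``all in $R_c$'' event still has enough room after subtracting the bad part — here one wants the conditional bad probability, conditioned on the event of (i), to be $<1$), there is a choice of $Y_1,\dots,Y_{d+1}$ making all $d+1$ cube-vertices good points in $R_c$, which yields $p(x)=b$.

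The main obstacle is making step (ii) quantitatively compatible with step (i): I need the bad points to be rare enough \emph{relative to} the region, and I need the conditioning on ``all $x+Y_I$ land in $R_c$'' not to distort the marginal of any single $x+Y_I$ too badly. The clean way to handle this is to not condition at all: directly estimate $\P[\,x + Y_I \in R_c \text{ for all } I \text{, and } x+Y_I \text{ is bad for some } I\,]$ by a union bound over $I$, where for each fixed $I$ the quantity $\P[\,x+Y_J \in R_c\ \forall J\text{, and } x+Y_I \text{ bad}\,]$ is at most $\P[x+Y_I \in R_c \text{ and bad}] \le |\text{bad points}|/|\F|^n \le 2^{-2(d+1)}\cdot|\F|^{-m}(1+\gamma)$ — actually one should compare against $\P[\,x+Y_J \in R_c\ \forall J\,] = |\F|^{-N}(1\pm\gamma)$, so the relevant ratio is the bad-fraction of $R_c$ times a benign correction, which is $< 2^{-2(d+1)}(1+O(\gamma))$; summing over the $2^{d+1}-1$ sets $I$ keeps this below $1$ with room to spare once $\gamma$ is small. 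Choosing $\gamma$ small (hence $\GF$ large, via Lemma~\ref{lemma:regionsuniform} and Corollary~\ref{cor:jointprob}) closes the argument; the constant $2^{-2(d+1)}$ in the hypothesis is precisely what is needed for the union bound over the $2^{d+1}$ vertices of the cube to beat $1$.
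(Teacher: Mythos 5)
Your outline is the right one---fix $x$, use the degree-$d$ cube identity, show that with positive probability all $x+Y_I$ land in $R_c$ and avoid the bad set $B$, conclude $p(x)=b$---and step (i) (estimating $\P[x+Y_I\in R_c\ \forall I]$ via Corollary~\ref{cor:jointprob}) matches the paper. The gap is in step (ii). You want to bound, for each fixed $I_0$, the ratio
\[
\frac{\P[\,x+Y_J\in R_c\ \forall J\ \text{and}\ x+Y_{I_0}\in B\,]}{\P[\,x+Y_J\in R_c\ \forall J\,]},
\]
and you assert without justification that this ratio is roughly the density of $B$ inside $R_c$, i.e.\ $2^{-2(d+1)}(1+O(\gamma))$. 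But your proposed bound on the numerator, $\P[x+Y_{I_0}\in R_c\ \text{and bad}] \le 2^{-2(d+1)}|\F|^{-m}(1+\gamma)$, is hopeless against a denominator of order $|\F|^{-N}$ with $N=\sum_i\sum_{j\le\Delta(g_i)}\binom{d+1}{j}\gg m$: the ratio blows up by $|\F|^{N-m}$. And the claim that the conditional distribution of $x+Y_{I_0}$, given that \emph{all} the cube vertices land in $R_c$, is close to uniform on $R_c$, is exactly the nontrivial thing that needs a proof: the bad set $B$ has no structure visible to the $g_i$'s, so the almost-independence lemmas give you no direct handle on how often a cube vertex lands in $B$ once you condition on the full cube event.

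The paper's proof closes this gap with a Cauchy--Schwarz step that you are missing. Writing $p_B = \sum_{x_0\in B}\P[x+Y_J\in R_c\ \forall J,\ x+Y_{I_0}=x_0]$ and applying Cauchy--Schwarz gives
\[
p_B^2 \ \le\ |B|\cdot|\F|^{-n}\cdot\P[\,x+Y_J\in R_c,\ x+Z_J\in R_c\ \forall J\ \mid\ Y_{I_0}=Z_{I_0}\,],
\]
a \emph{pairs-of-hypercubes} count, which is then controlled by Lemma~\ref{lemma:counting}(2). This yields $p_B^2 \le \frac{|B|}{|R|}p_R^2(1\pm 2\gamma)$, hence $p_B/p_R \le 2^{-(d+1)}(1\pm 2\gamma)$, and the union bound over the $2^{d+1}-1$ choices of $I_0$ goes through. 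Note that the square root coming out of Cauchy--Schwarz is precisely why the hypothesis uses $2^{-2(d+1)}$ rather than $2^{-(d+1)}$: your accounting, which uses $2^{-2(d+1)}$ linearly in the union bound, ``has room to spare'' only because it is implicitly assuming a conditional-uniformity statement that it never establishes. To repair your argument you would need the second part of the counting lemma (or an equivalent second-moment estimate) and the Cauchy--Schwarz reduction to it.
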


Before proving the lemma we need the following counting lemma on the
number of hypercubes and pairs of hypercubes inside a region,
similar to one in \cite{gt07}. However, our technique avoids the
need of integration.

\begin{lemma}\label{lemma:counting}
Let $\gamma=\gamma(m)>0$ be small enough error term, and assume
$\GF$ is large enough. For any point $R = R_c$ and a point $x \in R$
we have:
\begin{enumerate}
  \item Let $Y_1,...,Y_{d+1}$ be variables in $\F^n$. Then:
  \begin{align*}
  \P_{Y_1,...,Y_{d+1} \in \F^n}[x + Y_I \in R,\ \forall I \subseteq [d+1]] = |\F|^{- \sum_{i=1}^m \sum_{j=1}^{\Delta(g_i)} {d+1 \choose j}} (1 \pm \gamma)
  \end{align*}

  \item Let $Y_1,...,Y_{d+1},Z_1,...,Z_{d+1}$ be variables in $\F^n$. For any non-empty $I_0 \in [d+1]$:
  \begin{align*}
  &\P_{Y_1,...,Y_{d+1},Z_1,...,Z_{d+1} \in \F^n}\[x + Y_I \in R, x + Z_I \in R,\ \forall I \subseteq [d+1] |Y_{I_0} = Z_{I_0}\] \le \\
  &|\F|^m \(|\F|^{- \sum_{i=1}^m \sum_{j=1}^{\Delta(g_i)} {d+1 \choose j}}\)^2 (1 + \gamma)
  \end{align*}
\end{enumerate}
\end{lemma}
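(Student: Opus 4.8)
The plan is to reduce both statements to applications of Corollary~\ref{cor:jointprob}, which already packages the key output of the strong-regularity machinery (Lemmas~\ref{lemma:belowdeltainducesall} and~\ref{lemma:belowdeltaindependent}). For part (1), the event ``$x+Y_I \in R$ for all $I \subseteq [d+1]$'' is exactly the event that $g_i(x+Y_I) = c_i = g_i(x)$ for all $i \in [m]$ and all $I \subseteq [d+1]$. First I would apply Lemma~\ref{lemma:belowdeltainducesall} with $x'=x$, $y'_1 = \cdots = y'_{d+1} = 0$ (and $k = d+1$) to see that this full event is equivalent to the restricted event where only $1 \le |I| \le \Delta(g_i)$ is required. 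Then Corollary~\ref{cor:jointprob} (again with $x'=x$, all $y'_j = 0$, $k = d+1$) gives the probability $|\F|^{-\sum_{i=1}^m \sum_{j=1}^{\Delta(g_i)}\binom{d+1}{j}}(1\pm\gamma)$ directly, which is exactly the claimed expression. This step is essentially bookkeeping once the corollary is in hand.

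For part (2), the main idea is that conditioning on $Y_{I_0} = Z_{I_0}$ couples the two hypercubes only through that single linear constraint, and away from it the two families $\{Y_I\}$ and $\{Z_I\}$ behave like two independent copies. I would first fix an arbitrary choice of $Z_1,\dots,Z_{d+1}$ with $x + Z_I \in R$ for all $I$ (there are at most $|\F|^{n(d+1)}$ such choices, but by part (1) applied to the $Z$'s the fraction that are ``good'' is $|\F|^{-\sum \sum \binom{d+1}{j}}(1+\gamma)$), and for each such choice bound the conditional probability over $Y_1,\dots,Y_{d+1}$ that $x+Y_I \in R$ for all $I$ given $Y_{I_0} = Z_{I_0} =: v$. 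The constraint $Y_{I_0} = v$ removes one degree of freedom among the $Y_i$'s, so I would re-parametrize: pick $i_0 \in I_0$ and set $Y_{i_0} = v - \sum_{i \in I_0 \setminus \{i_0\}} Y_i$, leaving $d$ free variables. Now the collection of points $x + Y_I$ can be written in the form $x_I + \sum_{i \in I'} a^{(I')}_i Y_i$ in these $d$ free variables (the coefficients $a^{(I')}_i$ are $\pm 1$ after substitution, hence nonzero on their support), so Lemma~\ref{lemma:belowdeltaindependent} applies with $k = d$: the joint distribution of the relevant $g_i(x+Y_I)$ is $\gamma$-close to uniform on a space whose dimension I then need to count. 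The conditional probability that all these land on the prescribed values is therefore at most $|\F|^{-(\text{that dimension})}(1+\gamma)$; multiplying by the number of good $Z$-configurations and comparing exponents should yield the stated bound, with the stray factor $|\F|^m$ absorbing the discrepancy between $\sum_i \sum_{j=1}^{\Delta(g_i)}\binom{d}{j}$ (what the $d$ free $Y$-variables give) and $\sum_i \sum_{j=1}^{\Delta(g_i)}\binom{d+1}{j}$ (the exponent appearing in the bound).

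The hard part will be the exponent arithmetic in part (2): I need to check that after collapsing one variable via $Y_{I_0}=Z_{I_0}$, the number of ``independent'' evaluations $g_i(x+Y_I)$ governed by Lemma~\ref{lemma:belowdeltaindependent} is at least $\sum_{i=1}^m\sum_{j=1}^{\Delta(g_i)}\binom{d+1}{j} - m$, so that the conditional probability is at most $|\F|^{-\sum\sum\binom{d+1}{j}}|\F|^m(1+\gamma)$ per $Z$-configuration. Concretely, the sets $I \subseteq [d+1]$ with $|I| \le \Delta(g_i)$ that survive as genuinely new constraints after the substitution are all of them except those that become linearly dependent through the single relation $Y_{I_0} = Z_{I_0}$; a careful count shows this loses at most one dimension per polynomial $g_i$, i.e.\ at most $m$ total, which is exactly the slack built into the statement. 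I would also need to make sure Lemma~\ref{lemma:belowdeltainducesall} still applies after the reparametrization so that the ``$|I| > \Delta(g_i)$'' evaluations are automatically determined and do not cost extra — but since $x = x'$ and the shifts are of the required form, that reduction goes through verbatim. Finally I would take $\gamma$ small enough and $\GF$ large enough at the outset so that the two invocations of Corollary~\ref{cor:jointprob} / Lemma~\ref{lemma:belowdeltaindependent} and the single union-type bound over $Z$-configurations all hold with the same error parameter, at the cost of replacing $\gamma$ by $O(\gamma)$, which is harmless.
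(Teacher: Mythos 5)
Part (1) of your plan is exactly the paper's argument --- a direct application of Corollary~\ref{cor:jointprob} with $k=d+1$, $x'=x$, and all $y'_j=0$ --- so there is nothing to add there.

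For part (2), your decomposition (fix a good $Z$-configuration, then bound the conditional probability over $Y$ given $Y_{I_0}=Z_{I_0}=v$, eliminating $Y_{i_0}$) is genuinely different from the paper's, and it runs into a counting obstruction that you have not resolved. After you substitute $Y_{i_0}=v-\sum_{i\in I_0\setminus\{i_0\}}Y_i$, each $Y_I$ becomes an affine form in the $d$ free variables whose support $\phi(I)$ equals $I$ when $i_0\notin I$ and $I\triangle I_0$ when $i_0\in I$. This map is exactly $2$-to-$1$ onto the subsets of $[d+1]\setminus\{i_0\}$: for any such $T$, the two preimages $T$ and $T\triangle I_0$ yield two constraints with the \emph{same} support $T$ but different coefficient vectors and different constant shifts. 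Lemma~\ref{lemma:belowdeltaindependent} permits only one choice of $(x_I,a^{(I)})$ per nonempty support, so you must discard one member of each colliding pair. After doing so, the number of constraints you retain for a fixed $g_i$ is at most $\sum_{j=1}^{\Delta(g_i)}\binom{d}{j}$, and your conditional bound is at best $|\F|^{-\sum_i\sum_{j=1}^{\Delta(g_i)}\binom{d}{j}}(1+\gamma)$. The per-$g_i$ shortfall from the exponent you need is $\sum_{j=1}^{\Delta(g_i)}\binom{d+1}{j}-\sum_{j=1}^{\Delta(g_i)}\binom{d}{j}=\sum_{j=0}^{\Delta(g_i)-1}\binom{d}{j}$, which equals $1$ only when $\Delta(g_i)=1$ and is strictly larger otherwise; summed over $i$ this far exceeds the slack $|\F|^m$ in the statement. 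Your claim that ``this loses at most one dimension per polynomial $g_i$'' is therefore incorrect, and the exponent arithmetic does not close.

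The paper avoids the collision entirely by not conditioning. It performs one invertible linear change of variables on $(Y_1,\dots,Y_{d+1})$ and on $(Z_1,\dots,Z_{d+1})$, setting $Y'_s=Y_{I_0}$ and $Y'_i=Y_i$ for $i\ne s$ (and similarly for $Z'$); the conditioning $Y_{I_0}=Z_{I_0}$ then becomes identification of the single coordinate $Y'_s=Z'_s$ inside a system of $2(d+1)-1$ independent variables. Under this transformation the map $I\mapsto T_I$ from index sets to $Y'$-supports is a \emph{bijection} on the power set of $[d+1]$, so no two constraints inside either cube collide, and for every $(i,T)$ with $|T|\le\Delta(g_i)$ one keeps precisely the constraint coming from the unique $J$ with $T_J=T$ (this is legitimate: the full event forces $g_i(x+Y_J)=g_i(x)$ for every $J$, not only those with $|J|\le\Delta(g_i)$). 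The only overlap between the $Y'$ and $Z'$ families is the $m$ constraints supported on $\{s\}$, so a single application of Lemma~\ref{lemma:belowdeltaindependent} over the $2d+1$ variables $Y'_1,\dots,Y'_{d+1},Z'_1,\dots,Z'_{s-1},Z'_{s+1},\dots,Z'_{d+1}$ yields exactly $2\sum_i\sum_{j=1}^{\Delta(g_i)}\binom{d+1}{j}-m$ constraints and hence the stated bound. If you want to keep your fix-$Z$-then-condition scheme, you would need a strengthening of Lemma~\ref{lemma:belowdeltaindependent} that tolerates two distinct affine forms per support, which the paper does not provide.
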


\begin{proof}

\begin{enumerate}
\item
This is a direct application of Corollary~\ref{cor:jointprob} for
$k=d+1$, $x'=x$ and $y_1,...,y_k = 0$.

\item
Assume w.l.o.g that $I_0 = \{1,2,...,s\}$ for $1 \le s \le d+1$. We
start by making a linear transformation on the coordinates to bring
$Y_{I_0}$ and $Z_{I_0}$ to a single variable. Let $Y'_i=Y_i$ for $i
\ne s$ and $Y'_s = Y_1+...+Y_s$, and similarly define
$Z'_1,...,Z'_{d+1}$. We write $Y_I$ in the basis of
$Y'_1,...,Y'_{d+1}$. Divide $I = I_s \cup I_{\bar{s}}$ where $I_s =
I \cap [s]$ and $I_{\bar{s}}= I \setminus I_s$. We have:

\begin{itemize}
  \item If $s \notin I$, $Y_I = \sum_{i \in I} Y'_i$
  \item If $s \in I$, $Y_I = Y'_s - \sum_{i \in [s] \setminus I_s} Y'_i + \sum_{i \in I_{\bar{s}}} Y'_i$
\end{itemize}

Consider for every $I$ the set $T_I$ of indices of
$Y'_i$ which appear in the expansion of $Y_I$. Notice that for any
$T \subseteq [d+1]$ there is exactly one $I$ s.t. $T_I = T$. In
particular, in order that $g_i(x+Y_I)=g_i(x)$ for all $I$, we must
have in particular that:
\begin{itemize}
  \item For any $I \subseteq [d+1]$ s.t. $s \notin I$ and $|I| \le \Delta(g_i)$,
  $$
  g_i(x + Y'_I) = g_i(x)
  $$
  \item For any $I \subseteq [d+1]$ s.t. $s \in I$ and $|I| \le \Delta(g_i)$,
  $$
  g_i(x + Y'_s - Y'_{I \cap [s-1]} + Y'_{I \cap \{s+1,...,d+1\}}) = g_i(x)
  $$
\end{itemize}

Similarly for the $Z'$'s, using the fact that the event
$Y_{I_0}=Z_{I_0}$ translates to $Z'_s=Y'_s$:
\begin{itemize}
  \item For any $I \subseteq [d+1]$ s.t. $s \notin I$ and $|I| \le \Delta(g_i)$,
  $$
  g_i(x + Z'_I) = g_i(x)
  $$
  \item For any $I \subseteq [d+1]$ s.t. $s \in I$ and $|I| \le \Delta(g_i)$,
  $$
  g_i(x + Y'_s - Z'_{I \cap [s-1]} + Z'_{I \cap \{s+1,...,d+1\}}) = g_i(x)
  $$
\end{itemize}

The probability of this event is an upper bound on our required
probability. Since our variables
$$
Y'_1,...,Y'_{d+1},Z'_1,..,Z'_{s-1},Z'_{s+1},...,Z'_{d+1}
$$
are uniform and independent, we can apply
Lemma~\ref{lemma:belowdeltaindependent} to show that its probability
is the required upper bound.
The number of subsets of size $j > 1$ in the above events is ${d+1
\choose j}$ for the event on the $Y'$'s, and also ${d+1 \choose j}$
for the event on $Z'_1,...,Z'_{s-1},Y'_s,Z'_{s+1},...,Z'_{d+1}$. For
$j=1$ however we have intersection ($Y'_s$ is appearing twice), and
so the number of events is $2{d+1 \choose 1}-1$. Thus,by
Lemma~\ref{lemma:belowdeltaindependent} the probability of the total
event is:
$$
|\F|^m \(|\F|^{-\sum_{i=1}^m \sum_{j=1}^{\Delta(g_i)} {d+1 \choose
j}}\)^2 (1 \pm \gamma)
$$
which upper bounds the required probability.

\end{enumerate}
\end{proof}

We now prove Lemma~\ref{lemma:noalmostconstantregions} using
Lemma~\ref{lemma:counting}. We follow the same proof as in
\cite{gt07}.

\begin{proof}
Let $B \subseteq R$ be the set of all "bad" points $x \in R$ on
which $p(x) \ne b$. By our assumption, $|B| < 2^{-2(d+1)} |R|$.
Assume $B$ is non-empty, and choose some $x \in B$. Let
$Y_1,...,Y_{d+1}$ be random variables in $\F^n$. Fix small enough
$\gamma=\gamma(m)$. By Lemma~\ref{lemma:counting} (1),
$$
p_R = \P[x + Y_I \in R,\ \forall I \subseteq [d+1]] \ge |\F|^{-
\sum_{i=1}^m \sum_{j=1}^{\Delta(g_i)} {d+1 \choose j}} (1 - \gamma)
$$

We now wish to bound the event that when all $X + Y_I$ are in $R$,
some $X + Y_I$ is in $B$, and then union bound over all possible
$I$.

We start by applying Cauchy-Schwartz to transform the problem to
counting pairs of hypercubes. Fix some non-empty $I_0 \subseteq
[d+1]$, and let
\begin{align*}
p_B =& \P[x + Y_I \in R\ \forall I \subseteq [d+1]\ \wedge\ x +
Y_{I_0}
\in B] = \\
&\sum_{x_0 \in B} \P[x + Y_I \in R\ \forall I \subseteq [d+1]\
\wedge\ x + Y_{I_0}=x_0]
\end{align*}

We need to upper bound $p_B$.
\begin{align*}
p_B^2 = & \(\sum_{x_0 \in B} \P[x + Y_I \in R\ \forall I \subseteq [d+1]\ \wedge\ x + Y_{I_0}=x_0]\)^2 \le \\
& |B| \sum_{x_0 \in B} \P[x + Y_I \in R\ \forall I \subseteq [d+1]\ \wedge\ x + Y_{I_0}=x_0]^2 = \\
& |B| \P[x + Y_I \in R\ \forall I \subseteq [d+1]\ \wedge\ x + Z_I \in R\ \forall I \subseteq [d+1]\ \wedge\ x + Y_{I_0} = x + Z_{I_0}] = \\
& |B| |\F|^{-n} \P[x + Y_I \in R,\ x + Z_I \in R\ \forall I
\subseteq [d+1]|x + Y_{I_0} = x + Z_{I_0}]
\end{align*}
where $Z_1,...,Z_{d+1}$ are new variables in $\F^n$.

By claim (2) in Lemma~\ref{lemma:counting} we get that this
probability is at most
$$
|B| |\F|^{m-n} p_R^2 (1 + \gamma)
$$

By
Lemma~\ref{lemma:regionsuniform}, $|R| = |\F|^n \P_{X \in \F^n}[X
\in R] = |\F|^{n-m} (1 \pm \gamma)$. Thus, we have that:
$$
p_B^2 \le \frac{|B|}{|R|} p_R^2 (1 \pm 2\gamma) \le 2^{-2(d+1)}
p_R^2
$$

and thus $\frac{p_B}{p_R} \le 2^{-(d+1)} (1 \pm 2 \gamma)$. We can
now union bound over all non-empty $I_0 \subseteq [d+1]$. The
probability that there is some $I_0$ for which $x + Y_{I_0} \in B$
is at most
$$
(2^{d+1}-1) (2^{-(d+1)} + \gamma) < 1
$$
for small enough $\gamma$.

Thus, there must exist $y_1,...,y_{d+1} \in \F^n$ s.t.
$$
x + y_I \in R \setminus B
$$

for all non-empty $I \subseteq [d+1]$. Equivalently, $p(x + y_I) =
b$ for all such $I$'s. However, since $p(X)_{y_1,...,y_{d+1}} \equiv
0$,
$$
p(x) = \sum_{I \subseteq [d+1], |I|>0} (-1)^{|I|+1} p(x + y_I)
$$
and so if all $p(x+y_I)=b$, then also $p(x)=b$, hence $x \notin B$.
So we have proved that $B$ is empty, i.e. $p$ is constant on $R$.
\end{proof}

We finish the proof of Theorem~\ref{thm:main} by proving that if
$p(X)$ is constant over almost all regions, then it must be constant
over any region.

\begin{lemma}[If almost all regions are totally good, all are totally good]\label{lemma:allregionsconstant}
Assume that the fraction of regions on which $p$ is constant is at
least
$1-2^{-(d+2)}$. Then $p$ is constant over any region.
\end{lemma}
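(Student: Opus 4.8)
The plan is to exploit the cube identity for $p$ one more time, but now at the level of \emph{regions} rather than points. We already know (Lemma~\ref{lemma:noalmostconstantregions}) that $p$ is constant on each ``almost good'' region, and we now have the hypothesis that at most a $2^{-(d+2)}$ fraction of all $|\F|^m$ regions fail to have $p$ constant on them; call these the \emph{bad regions}. Fix a bad region $R = R_c$ and a point $x \in R$. Take $d+1$ random variables $Y_1,\dots,Y_{d+1} \in \F^n$. The key point is that, conditioned on $x+Y_I$ landing in \emph{some} region for each $I$, the tuple of regions $(R(x+Y_I))_{I \subseteq [d+1]}$ is distributed close to how it would be if the region-values were genuinely drawn from the almost-independent distribution guaranteed by Corollary~\ref{cor:jointprob} and Lemma~\ref{lemma:belowdeltaindependent}. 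I would use this to argue that with positive probability every one of the other $2^{d+1}-1$ points $x + Y_I$ (for $I \neq \emptyset$) lands in a \emph{good} region.

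Concretely, the first step is a union bound: the number of bad regions is at most $2^{-(d+2)} |\F|^m$, and for each fixed non-empty $I$ the point $x + Y_I$, being (close to) uniform on $\F^n$, lands in any particular region with probability about $|\F|^{-m}(1\pm\gamma)$; summing over the bad regions, $\P[x+Y_I \text{ is in a bad region}] \le 2^{-(d+2)}(1+\gamma)$. Here I need the marginal near-uniformity of $x+Y_I$, which follows from Lemma~\ref{lemma:regionsuniform} together with Lemma~\ref{lemma:belowdeltaindependent} (the $g_i(x+Y_I)$ for the relevant $I$ are $\gamma$-close to uniform, so in particular each single $g_i(x+Y_I)$, hence the whole region index, is near-uniform). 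Union bounding over all $2^{d+1}-1$ non-empty subsets $I$, the probability that \emph{some} $x+Y_I$ is bad is at most $(2^{d+1}-1)(2^{-(d+2)}+\gamma) < 1$ for $\gamma$ small enough. Hence there exist fixed $y_1,\dots,y_{d+1} \in \F^n$ such that $x + y_I$ lies in a good region for every non-empty $I \subseteq [d+1]$.

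The second step uses the cube constraint to pin down $p(x)$. Since $\deg(p) \le d$, the $(d{+}1)$-st derivative of $p$ vanishes, giving
$$
p(x) = \sum_{I \subseteq [d+1],\, |I|>0} (-1)^{|I|+1} p(x + y_I).
$$
Each term on the right is $p$ evaluated at a point lying in a good region, and on a good region $p$ is a \emph{constant} (Lemma~\ref{lemma:noalmostconstantregions}); so each $p(x+y_I)$ equals the constant value of $p$ on the region $R(x+y_I)$, which depends only on the $g_j(x+y_I)$, i.e.\ only on the region index. But the region index $R(x+y_I)$ is itself determined by the region of $x$ together with $y_1,\dots,y_{d+1}$ in a way that does not depend on which representative $x \in R$ we picked (one can fix the $y_I$'s to be the \emph{same} for all $x \in R$, since once we have a single good choice it works uniformly --- more carefully, one re-runs the probabilistic argument with $x$ ranging over $R$ and averages, or simply notes $R(x+y_I)$ only depends on the differences, hence only on $R$ and the $y$'s because all points of $R$ have the same $g$-values). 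Therefore the right-hand side is the same for every $x \in R$, so $p$ is constant on $R$, contradicting that $R$ was bad --- unless there were no bad regions to begin with.

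The main obstacle I expect is the subtlety in the last step: ensuring that the value $\sum_{|I|>0}(-1)^{|I|+1} p(x+y_I)$ really is independent of the choice of $x \in R$. The clean way around it is to choose $y_1,\dots,y_{d+1}$ \emph{once} so that for a random $x \in R$ all $x+y_I$ are simultaneously good with positive probability (an averaging argument over $x \in R$, again using that bad regions are rare and that the joint distribution of $(x+Y_I)$ over random $x\in R$ and random $Y$ is near-uniform by Lemma~\ref{lemma:belowdeltaindependent}); then for that fixed tuple, $p(x+y_I)$ is the constant attached to region $R(x+y_I)$, and since all $x \in R$ share the same $g$-values, $R(x+y_I)$ is the same region for all of them, so $p(x)$ takes a single value across $R$.
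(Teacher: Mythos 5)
There is a genuine gap in the final step of your argument, and it is exactly the subtlety you yourself flagged. You assert that for a fixed tuple $y_1,\dots,y_{d+1}$, ``$R(x+y_I)$ is the same region for all $x\in R$, since all $x\in R$ share the same $g$-values.'' This is false unless all the $g_j$ are affine. Knowing $g_j(x)=c_j$ does not determine $g_j(x+y_I)$: the difference $g_j(x+y_I)-g_j(x)$ is a derivative of $g_j$, which genuinely depends on $x$ when $\deg g_j \ge 2$. For instance, with $g_j(x)=x_1 x_2$ over $\F_2$ and $y_I=e_1$, the points $x=(0,0,\dots)$ and $x'=(0,1,0,\dots)$ lie in the same region ($g_j=0$) yet $g_j(x+y_I)=0$ while $g_j(x'+y_I)=1$. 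So the right-hand side of your cube identity $\sum_{|I|>0}(-1)^{|I|+1}p(x+y_I)$ is not, as written, visibly independent of $x\in R$, and your averaging fallback does not repair this: it shows that for random $x\in R$ a good $y$-tuple exists, but different $x$'s would then use different tuples and land in different region sequences.

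The paper closes this gap differently. Rather than pinning down a single value of $p$ on $R$, it fixes two arbitrary points $x,x'\in R$ and builds two cubes simultaneously. First it chooses $y'_1,\dots,y'_{d+1}$ (by the same rare-bad-regions union bound you use) so that all $x'+y'_I$ land in good regions. Then, crucially, it invokes Corollary~\ref{cor:jointprob} applied to the pair $x,x'$ (which have the same $g$-values, so the corollary applies) to find $y_1,\dots,y_{d+1}$ with $g_i(x+y_I)=g_i(x'+y'_I)$ for all $i\in[m]$ and all $I\subseteq[d+1]$. This forces each $x+y_I$ into the same region as $x'+y'_I$ --- hence a good region --- and therefore $p(x+y_I)=p(x'+y'_I)$ for all non-empty $I$. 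Subtracting the two cube identities then gives $p(x)=p(x')$. The matching step via Corollary~\ref{cor:jointprob} is the ingredient your proposal is missing; without it, there is no mechanism forcing the region sequences around $x$ and $x'$ to agree, and the conclusion does not follow.
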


\begin{proof}
Let $R$ be any region, and $x,x' \in R$ two points in $R$. We need
to show that $p(x)=p(x')$. Choose $y'_1,...,y'_{d+1} \in \F^n$
randomly. The probability that $x' + y'_I$ falls in a bad region for
any non-empty $I \subseteq [d+1]$ is $2^{-(d+2)}$
(since regions are almost uniform, see
Lemma~\ref{lemma:regionsuniform}). Thus, applying union bound over
all non-empty $I \subseteq [d+1]$ we get that $\{x'+y'_I\}$ fall in
good regions for all non-empty $I$ with probability at least $1/2$.
Fix some $y'_1,...,y'_{d+1}$ fulfilling this requirement.

Let $Y_1,...,Y_{d+1} \in \F^n$ be random variables. Since
$g_i(x)=g_i(x')$ for all $i \in [m]$ we can apply
Corollary~\ref{cor:jointprob}:
$$
\P\[g_i(x+Y_I)=g_i(x'+y'_I)\ \forall\ i \in [m],\ I \subseteq
[d+1]\] = |\F|^{-\sum_{i=1}^m \sum_{j=1}^{\Delta(g_i)} {d+1 \choose
j}} (1 \pm \gamma)
$$
In particular, for small enough $\gamma$ we get that
$$
\P\[g_i(x+Y_I)=g_i(x'+y'_I)\ \forall\ i \in [m],\ I \subseteq
[d+1]\]
> 0
$$
Let $y_1,...,y_{d+1}$ be such assignment to $Y_1,...,Y_{d+1}$. We
thus have that for all non-empty $I \subseteq [d+1]$ and for all $i
\in [m]$, $g_i(x+y_I) = g_i(x'+y'_I)$. Since the region of $x'+y'_I$
is good for all non-empty $I$, we get that for all non-empty $I
\subseteq [d+1]$,
$$
p(x+y_I) = p(x'+y'_I)
$$
We now use the fact that $p$ is a degree d polynomial. If we derive
$p$ $d+1$-times in any direction, we will always get zero. We thus
have that
for $x,y_1,...,y_{d+1} \in \F^n$:
$$
\sum_{I \subseteq [d+1]} (-1)^{|I|} p(x+y_I) = 0
$$
Since the same identity is true for $x',y'_1,...,y'_{d+1}$, we get
that $p(x)=p(x')$.
\end{proof}

\textbf{Acknowledgement}
We would like to thank Avi Wigderson, Noga Alon and Terrence Tao for helpful discussions.
The second author would like to thank his advisor, Omer Reingold, for his help and support.

\end{document}